\newcommand\R{\mathbb{R}}
\newcommand\eps{\varepsilon}
\newcommand\kin{\mathrm{kin}}
\newcommand\loc{\mathrm{loc}}
\theoremstyle{plain}
\newtheorem{theorem}{Theorem} [section]
\newtheorem{lemma}[theorem]{Lemma}
\newtheorem{proposition}[theorem]{Proposition}
\theoremstyle{remark}
\newtheorem{remark}[theorem]{Remark}
\theoremstyle{definition}
\newtheorem{definition}[theorem]{Definition}
\newtheorem{assumption}[theorem]{Assumption}
\numberwithin{equation}{section}
\title{The isothermal limit for the compressible Euler Equations with Damping}
\author{Quentin Chauleur}
\address{Univ Rennes, CNRS, IRMAR - UMR 6625, F-35000 Rennes, France}
\email{quentin.chauleur@ens-rennes.fr}
\begin{document}

\maketitle

\begin{abstract}
We consider the isothermal Euler system with damping. We rigorously show the convergence of Barenblatt solutions towards a limit Gaussian profile in the isothermal limit $\gamma \rightarrow 1$, and we explicitly compute the propagation and the behavior of Gaussian initial data. We then show the weak $L^1$ convergence of the density as well as the asymptotic behavior of its first and second moments.
\end{abstract}

\tableofcontents

\section{Introduction}

We consider the isentropic compressible Euler equations with frictional damping:
\begin{subequations} \label{EL_baro}
\begin{empheq}[left=\empheqlbrace]{align}
& \partial_t \rho + \partial_x m =0, \label{continuityEL_baro}  \\
& \partial_t m+ \partial_x \left( \frac{m^2}{\rho} \right) + \partial_x \rho^{\gamma} + m= 0, \label{fluidEL_baro} 
\end{empheq} 
\end{subequations}
with $\rho(0,x)=\rho_0(x) \geq 0$ and $m(0,x)=m_0(x)$ for $x \in \R$, $t \geq 0$, and the adiabatic gas exponent $\gamma >1$. Such a system appears in the mathematical modeling of compressible flow through a porous medium, and its study has drawn a lot of attention over the last decades \cite{marcati2000}. The global existence of $L^{\infty}$ weak entropy solutions to the Cauchy problem of \eqref{EL_baro} is now well established (see for instance \cite{nishida1978} and \cite{huang2006}), and it is known that their long-time asymptotic behavior \cite{huang2000} is governed by the limit diffusive profile:
\begin{subequations} 
\begin{empheq}[left=\empheqlbrace]{align}
& \partial_t \overline{\rho} = \partial_x^2 \overline{\rho}^{\gamma}, \label{porous_medium_eq} \\
& \overline{m}=-\partial_x \overline{\rho}^{\gamma}, \label{darcy_law}
\end{empheq} 
\end{subequations}
where equation \eqref{porous_medium_eq} is the porous media equation, whose fundamental solutions are called Barenblatt solutions \cite{barenblatt1953}, and equation \eqref{darcy_law} is the famous Darcy law. In \cite{liu1996}, the author constructs a class of particular solutions for \eqref{EL_baro} which tend to the Barenblatt solutions $\overline{\rho}$ asymptotically in time, with an explicit rate $\log (t)/t$. In \cite{marcati2005}, the authors find the explicit rates
\[ \|    \rho - \overline{\rho} \|_{L^2(\R)}^2 \leq C (1+t)^{-k_1+\eps}  \ \ \ \text{if} \  1< \gamma \leq2,\]
and 
\[ \|    \rho - \overline{\rho} \|_{L^{\gamma}(\R)}^{\gamma} \leq C (1+t)^{-k_2+\eps}  \ \ \ \text{if} \  \gamma >2,  \]
for any $\eps >0$, with $k_1= \min \left( \frac{\gamma^2}{(\gamma +1)^2}, \frac{\gamma-1}{\gamma} \right)$ and $k_2= \min \left( \frac{\gamma^2}{(\gamma +1)^2}, \frac{1}{\gamma} \right)$, for every $L^{\infty}$ weak entropy solution $(\rho,m)$ of \eqref{EL_baro}. Unfortunately, the decay rates are not in $L^1$-norm, which is the natural norm as \eqref{continuityEL_baro}  and \eqref{porous_medium_eq} both satisfies the conservation of norm 
\[ \| \rho(t,.) \|_{L^1(\R)} =  \|\rho_0 \|_{L^1(\R)} \ \ \ \text{and} \ \ \  \| \overline{\rho}(t,.) \|_{L^1(\R)} =  \|\overline{\rho}_0 \|_{L^1(\R)} \ \ \ \text{for all} \ t \geq 0. \]
Decay rates in $L^1$-norm were achieved for a particular range of $\gamma$ in \cite{huang2011}, where the authors show that
\[ \|    \rho - \overline{\rho} \|_{L^1(\R)} \leq C (1+t)^{-\frac{1}{4(\gamma+1)}+\eps}  \ \ \ \text{if} \  1 < \gamma < 3, \]
which was recently improved and extended in \cite{huang2019} with the estimate
\[ \|    \rho - \overline{\rho} \|_{L^1(\R)} \leq C (1+t)^{-\frac{1}{(\gamma+1)^2}+\eps}  \ \ \ \text{for all} \ \gamma> 1. \]
Throughout the years, a lot of effort has been put to extend the range of $\gamma$ to finally achieve the full range $\gamma > 1$ of barotropic pressure laws $P(\rho)=\rho^{\gamma}$. The next natural step is the study of the case $\gamma=1$, which leads to the isothermal pressure law $P(\rho)=\rho$. However, in the compressible fluid literature, much fewer results are known for the isothermal Euler system with damping
\begin{subequations} \label{EL}
\begin{empheq}[left=\empheqlbrace]{align}
& \partial_t \rho + \partial_x m =0, \label{continuityEL}  \\
& \partial_t m+ \partial_x \left( \frac{m^2}{\rho} \right) + \partial_x \rho + m= 0, \label{fluidEL} 
\end{empheq} 
\end{subequations}
which stands as the limit $\gamma \rightarrow 1$ of the isentropic system \eqref{EL_baro}. In \cite{huang2006}, the authors show the existence of $L^{\infty}$ entropy weak solutions to the Cauchy problem of \eqref{EL}. They also prove, up to a scaling in space $z=x/\sqrt{1+t}$, the $L^p_{\loc}$ convergence of the density $\rho$ towards a diffusive profile $\overline{\rho}$ which satisfies the heat equation
\begin{equation} \label{heat_equation}
\partial_t \overline{\rho} = \partial_x^2 \overline{\rho}, 
    \end{equation}  
in the case $\max(\overline{\rho}_+,\overline{\rho}_-)>0$, where $\overline{\rho}(\pm \infty) = \overline{\rho}_{\pm}$. However, time dependent Gaussian functions, which stand as particular solutions of the heat equation on the whole space, do not satisfy this condition. In a bounded domain $\Omega \subset \R^{d}$, the author shows in \cite{zhao2010} the exponential convergence of any global solution $(\rho,m)$ with small initial data $(\rho_0,m_0)$ towards the limit profile $(\| \rho_0 \|_{L^1}/|\Omega|,0)$. In fact, at least formally, equation \eqref{EL} satisfies the energy inequality
\begin{equation} \label{eq_energy}
    \frac{1}{2} \int_{\R} \frac{m^2}{\rho} + \int_{\R} \rho \log \rho + \int_0^t \int_{\R} \frac{m^2}{\rho} \leq E_0 \ \ \ \text{for all} \ t \geq 0,
\end{equation}
but the left-hand side has no definite sign due to the logarithmic contribution in the potential energy, a property that causes many technical difficulties such as a lack of compactness on the whole space $\R$ when the density $t \mapsto \rho(t,.) \in L^1(\R)$ vanishes at infinity. \\
In \cite{carles_rigidity}, in the case of the compressible Euler equations without frictional damping ($\mu =0$), the authors show that Gaussian functions stand as particular solutions of their system. They also prove that every global weak solution disperses and converges to a universal asymptotic Gaussian profile, up to a rescaling by their dispersion rate, in the weak $L^1$ topology, under some integrability assumptions on the moment of order 2 of the initial data $\rho_0$. We will show in this paper that this kind of property still holds in the case with damping \eqref{EL}, and we will study the asymptotic behavior of Gaussian solutions and moments of order 1 and 2 of the density $\rho$ as they stand as a key ingredient for the proof of this kind of feature. \\

The formal limit  $ \gamma \rightarrow 1$ is singular regarding many features of the isentropic case, especially for the energy inequality of system \eqref{EL_baro}: 
\begin{equation} \label{eq_energy_baro}
    \frac{1}{2} \int_{\R} \frac{m^2}{\rho} + \frac{1}{\gamma-1}\int_{\R} \rho^{\gamma} + \int_0^t \int_{\R} \frac{m^2}{\rho} \leq E_0.
\end{equation}
In this paper we propose to give sense to the formal isothermal limit $\gamma \rightarrow 1$, in particular we will rigorously show and illustrate the convergence of the Barenblatt solutions of \eqref{porous_medium_eq} to the Gaussian solutions of \eqref{heat_equation}, a feature which does not seem to be so much known in the community to the best of the author knowledge.\\

From now on, $(\rho_{\gamma},m_{\gamma})$ with $\gamma > 1$ will denote a solution of the isentropic Euler system \eqref{EL_baro}, whereas $(\rho,m)$ will denote a solution of the isothermal Euler system. The overline bar will be use for solutions of the different limit diffusive profiles. The notation $C$ will denote a generic constant $C > 0$.\\

This paper is organized as follows. In Section 2, we provide energy estimates, assumptions about existence and regularity of solutions of equations \eqref{EL}, and we state the main results of this paper. In Section 3, we show that Barenblatt solutions converge to a Gaussian profile as $\gamma \rightarrow 1$. In Section 4, we explicitly compute the behavior of Gaussian solutions of \eqref{EL}. In Section 5, we study the evolution of the first and second moments of every weak solution of \eqref{EL}. In Section 6, we prove the weak $L^1$ convergence of every solution towards a universal Gaussian profile.  Finally, in Section 7, we give some perspectives about the open question of explicit convergence rate in the isothermal case.

\section{Assumptions and main results}
We first give a notion of global weak solution for the 1-dimensional isothermal Euler equations with damping, and we will assume the global existence of this kind of solutions through the rest of the paper:

\begin{definition} \label{weak_solution_def}
We say that $(\rho,m)$ is a \textbf{weak solution} of system \eqref{EL} in $\left[0,T \right[$ with initial data $(\rho_0,m_0) \in L^1(\R) \times  L^1(\R)$, if there exists locally integrable functions $\sqrt{\rho}$, $\Lambda$ such that, by defining $\rho := \sqrt{\rho}^2$ and $m=\sqrt{\rho} \Lambda$, the following holds:

\begin{enumerate}[label=(\roman*)]

    \item The global regularity:
    \[ \sqrt{\rho} \in L^{\infty}(\left[ 0,T \right[;L^2(\R)), \ \ \ 
      \Lambda  \in L^{2}(\left[ 0,T \right[;L^2(\R)),  \]
      with the compatibility condition
      \[ \sqrt{\rho} \geq 0 \text{ a.e. on } (0,\infty) \times \R, \ \ \ 
      \Lambda =0 \text{ a.e. on } \left\{ \rho=0 \right\}. \]
      
      \item For any test function $\eta \in \mathcal{C}_0^{\infty}(\left[ 0,T \right[ \times \R)$, 
      
      \[ \int_0^T \int_{\R} (\rho \partial_t \eta + m  \partial_x \eta )dx dt + \int_{\R} \rho_0 \eta(0) dx =0, \]
      and for any test function $\zeta \in \mathcal{C}_0^{\infty}(\left[ 0,T \right[ \times \R; \R)$,
      \begin{gather*} \int_0^T \int_{\R} \left( m  \partial_t \zeta + \Lambda^2 \partial_x \zeta + \rho \partial_x \zeta - m \zeta \right) dx dt \\ 
      +  \int_{\R} m_0 \zeta(0) dx =0. \end{gather*}
\end{enumerate}
\end{definition}

\begin{assumption} \label{weak_solution_assumption}
Let $(\rho_0,m_0) \in L^1(\R) \times  L^1(\R)$. We assume that there exists a weak solution $(\rho,m)$ of system \eqref{EL} with initial data $(\rho_0,m_0)$ in the sense of Definition \ref{weak_solution_def} which satisfies, for all $t \geq 0$, the energy estimate
\begin{equation*} \label{energy_EL_eq}
    \frac{1}{2} \int_{\R} \frac{m^2(t,x)}{\rho(t,x)} dx + \int_{\R} \rho(t,x) \log(\rho(t,x)) dx + \int_0^{t} \int_{\R} \frac{m^2(s,x)}{\rho(s,x)} dx ds \leq C,
\end{equation*}
with the additional regularity
\[  (t,x) \mapsto x^2\rho(t,x) \in L^{\infty}( \left[ 0,T \right[; L^1(\R)).\]
\end{assumption}

\begin{remark}
The $L^{\infty}$ entropy weak solutions of system \eqref{EL} described in \cite{huang2006} are in fact weak solutions of \eqref{EL} when the initial data $(\rho_0,m_0) \in L^{\infty}(\R)  \times L^{\infty}(\R)$ satisfies the estimates
\[  0 \leq \rho_0 \leq C \ \ \ \text{and} \ \ \ |m_0| \leq C \rho_0 | \log \rho_0|    \]
(see \cite[Definition 1]{huang2006} for a precise definition of $L^{\infty}$ entropy weak solutions of system \eqref{EL}). Note that the existence of $L^{\infty}$ entropy weak solutions of the isothermal Euler system without damping is proved in \cite{lefloch2005} under the same initial data conditions.
\end{remark}

We now have to introduce several quantities in order to state our results. Let $\tau $ be the unique $\mathcal{C}^{\infty}(\left[ 0,\infty \right))$ solution of the differential equation
\begin{equation*}
\ddot{\tau} = \frac{ 2 }{\tau} - \dot{\tau}, \ \ \ \tau(0)=1, \ \ \ \dot{\tau}(0)=0.
\end{equation*}
From \cite{chauleur_temps_long}, we know that this function satisfies, as $t \rightarrow +\infty$,
\[ \tau(t) \sim 2 \sqrt{t} \ \ \ \text{and} \ \ \ \dot{\tau}(t) \sim \frac{1}{ \sqrt{t}} .   \]
We also introduce the Gaussian function
\[ \Gamma := e^{-y^2}. \]
We make the change of variable $y= x/ \tau(t)$,
\[ \rho (t,x) = \frac{1}{\tau(t)} R \left(t, \frac{x}{\tau(t)} \right)  \ \ \ \text{and } \ \ \  m(t,x) = \frac{1}{\tau(t)^2} M \left(t, \frac{x}{\tau(t)} \right)+ \frac{\dot{\tau}(t)}{\tau(t)} y R \left(t, \frac{x}{\tau(t)} \right) .   \]
We first remark that this change of variable preserves the $L^1$-norm, so for all $t \geq 0$, at least formally 
\[ \int_{\R} R(t,y) dy = \int_{\R} \rho(t,x) dx = \int_{\R} \rho_0(x) dx .  \]

System \eqref{EL} becomes, in the terms of the new unknown $(R,M)$,
\begin{subequations} \label{hydro_renormalized}
\begin{empheq}[left=\empheqlbrace]{align}
& \partial_t R + \frac{1}{\tau^2}\partial_y M =0, \label{continuity_renormalized}  \\
& \partial_t M + \frac{1}{\tau^2} \partial_y \left( \frac{M^2}{R} \right) +\partial_y R +2 y R +  M= 0, \label{fluid_renormalized}
\end{empheq} 
\end{subequations}

which have the following energy inequality:
\begin{equation} \mathcal{E}(t)  +   \int_0^t \frac{\dot{\tau}(t)}{\tau^3(t)} \int_{\R} \frac{M^2}{R} dyds \leq C, \label{scaled_eq_energy} \end{equation}
where
\begin{equation}
   \mathcal{E}(t) =\frac{1}{2 \tau(t)^2} \int_{\R}  \frac{M^2}{R}  dy + \int_{\R}  R\log \left( \frac{R}{\Gamma}  \right) dy. \label{scaled_energy}
\end{equation}
In fact, differentiating with respect to time the left part of \eqref{scaled_eq_energy}, using equations \eqref{continuity_renormalized} and \eqref{fluid_renormalized} and by integration by parts, we get that
\[  \frac{d}{dt} \left(  \frac{1}{2\tau(t)^2} \int_{\R} \frac{M^2}{R} + \int_{\R} R \log R + \int_{\R} y^2 R + \int_0^t \frac{\dot{\tau}(s)}{\tau(t)^3} \int_{\R} \frac{M^2}{R} ds   \right) = -\frac{1}{\tau^2} \int_{\R} \frac{M^2}{R} \leq 0.   \]

Note that from the Csiszár-Kullback inequality (see e.g. \cite{ineg_sobo_log})
\[   \int_{\R} R \log \left( \frac{R}{\Gamma}\right)  \geq \frac{1}{2 \| R_0 \|_{L^1}  }\|R-\Gamma \|^2_{L^1} \geq 0, \]
we direclty get that for all $t \geq 0$, $\mathcal{E}(t) \geq 0$. \\

Denoting by $\mathcal{E}_{\kin}$ the kinetic energy
\[ \mathcal{E}_{\kin}(t) :=\frac{1}{\tau^2(t)} \int_{\R} \frac{M^2(t,y)}{R(t,y)} dy  \]
for all $t\geq 0$, the asymptotics of $\tau$ and $\dot{\tau}$ and the following Lemma  \ref{lemma_entropy} give that
\[ \mathcal{E}_{\kin} \in L^{\infty}(\R_+) \ \ \ \text{and} \ \ \ \int_0^{\infty} \frac{\mathcal{E}_{\kin}(t)}{1+t} dt < \infty,  \]
so we know there exists a sequence $(t_n)_n$, such that $t_n \rightarrow \infty$ and
\[ \mathcal{E}_{\kin}(t_n) \rightarrow 0 \ \ \ \text{as} \ n \rightarrow \infty.   \]
Unfortunately, we have no proof that this property is actually satisfied uniformly in time, even if it seems to be a reasonable assumption that we state in the following:

\begin{assumption} \label{f_to_zero_assump}
We assume that
\[ \mathcal{E}_{\kin}(t) =\frac{1}{\tau^2(t)} \int_{\R} \frac{M^2(t,y)}{R(t,y)} dy \rightarrow 0  \ \ \ \text{as} \  t \rightarrow \infty. \]
\end{assumption}

This assumption is for instance satisfied for Gaussians solutions of system \eqref{EL}, with an explicit convergence rate in $t^{-1/2}$ (see Remark \ref{gaussian_second_moment_remark} below). We now state the two main results of this paper:

\begin{proposition} \label{convergence_momentum_prop}
Under Assumption \ref{weak_solution_assumption} and Assumption \ref{f_to_zero_assump}, we have 
\[  \int_{\R} \left( \begin{array}{c} 1 \\ y \\ y^2 \end{array} \right) R(t,y) dy \longrightarrow \int_{\R} \left( \begin{array}{c} 1 \\ y \\ y^2 \end{array} \right) \Gamma(y) dy \ \ \ \text{as} \ t \rightarrow \infty.   \]
\end{proposition}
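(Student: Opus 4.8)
The plan is to test the rescaled system \eqref{hydro_renormalized} against the weights $1$, $y$ and $y^2$ and to extract from it a closed, finite-dimensional nonautonomous system of ODEs for the moments of $R$. All the integrations by parts involved are legitimate because Assumption \ref{weak_solution_assumption} guarantees $y^2 R(t,\cdot)\in L^1(\R)$ uniformly in time, so after a truncation and a standard limiting argument the boundary terms drop. The three limits to be reached are $\int_\R \Gamma\,dy = \|\rho_0\|_{L^1}=:N$, $\int_\R y\,\Gamma\,dy = 0$ and $\int_\R y^2\,\Gamma\,dy = N/2$; it thus suffices to prove that the zeroth moment is conserved and equal to $N$, that the first moment tends to $0$, and that the second moment tends to $N/2$.

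The zeroth moment is immediate: testing \eqref{continuity_renormalized} against $1$ gives $\frac{d}{dt}\int_\R R\,dy = -\tau^{-2}\int_\R \partial_y M\,dy = 0$, so $\int_\R R(t,y)\,dy = N$ for all $t$, which coincides with $\int_\R\Gamma$ by the normalization built into the energy \eqref{scaled_energy}. For the first moment I would avoid any delicate analysis by returning to the physical variables: since the change of variable preserves mass one checks that $\int_\R yR(t,y)\,dy = \tau(t)^{-1}\int_\R x\rho(t,x)\,dx$, while testing \eqref{EL} against $x$ and against $1$ yields $\frac{d}{dt}\int_\R x\rho\,dx = \int_\R m\,dx$ and $\frac{d}{dt}\int_\R m\,dx = -\int_\R m\,dx$. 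Hence $\int_\R m\,dx = e^{-t}\int_\R m_0\,dx$ and $\int_\R x\rho(t,x)\,dx$ stays bounded (in fact converges); since $\tau(t)\to\infty$ this forces $\int_\R yR\,dy\to 0 = \int_\R y\Gamma$.

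The second moment is the heart of the matter. Testing \eqref{continuity_renormalized} against $y^2$ and \eqref{fluid_renormalized} against $y$ gives, with $m_2(t):=\int_\R y^2R\,dy$ and $\nu(t):=\int_\R yM\,dy$,
\[ \dot m_2 = \frac{2}{\tau^2}\,\nu, \qquad \dot\nu = -2\,m_2 - \nu + N + \mathcal{E}_{\kin}(t), \]
the forcing being exactly the scaled kinetic energy, since the pressure-type term produces $\tau^{-2}\int_\R M^2/R\,dy = \mathcal{E}_{\kin}$. Setting $A:=m_2 - N/2$ (whose vanishing is the target, because $\int_\R y^2\Gamma = N/2$) turns this into $\dot A = 2\tau^{-2}\nu$, $\dot\nu = -2A - \nu + \mathcal{E}_{\kin}$. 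I would first establish boundedness of $A$ through the Lyapunov functional $V:=A^2 + \tau^{-2}\nu^2$: using the two equations and $\frac{d}{dt}\tau^{-2} = -2\tau^{-3}\dot\tau$ one finds
\[ \dot V = -2\tau^{-2}\Big(1+\frac{\dot\tau}{\tau}\Big)\nu^2 + 2\tau^{-2}\nu\,\mathcal{E}_{\kin} \leq -\tau^{-2}\nu^2 + \tau^{-2}\mathcal{E}_{\kin}^2. \]
Since $\mathcal{E}_{\kin}\in L^\infty(\R_+)$ and $\int_0^\infty \mathcal{E}_{\kin}/(1+t)\,dt<\infty$ while $\tau^{-2}\sim 1/(4t)$, the bound $\mathcal{E}_{\kin}^2\le\|\mathcal{E}_{\kin}\|_{L^\infty}\mathcal{E}_{\kin}$ shows $\int_0^\infty\tau^{-2}\mathcal{E}_{\kin}^2\,dt<\infty$; hence $V$ is bounded, $A$ is bounded, and $\int_0^\infty\tau^{-2}\nu^2\,dt<\infty$.

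The convergence itself then rests on a slow--fast separation, which I expect to be the main obstacle: the vanishing coupling $\tau^{-2}\to 0$ makes $A$ (equivalently $m_2$) evolve on a diverging time scale, so no autonomous stability argument applies, and one only knows $\mathcal{E}_{\kin}\to 0$ from Assumption \ref{f_to_zero_assump}, with no control on its rate or derivative. I would use the elementary relaxation principle: if $\dot z + k(t)z = f(t)$ with $k\ge 0$, $\int_0^\infty k\,dt=\infty$ and $f/k\to L$, then $z\to L$. Apply it first to the fast variable $r:=\nu+2A$, which satisfies $\dot r = -(1-4\tau^{-2})\,r + (\mathcal{E}_{\kin} - 8\tau^{-2}A)$; here $k=1-4\tau^{-2}\to 1$ and the forcing tends to $0$ (by $\mathcal{E}_{\kin}\to 0$ and $A$ bounded), so $r\to 0$ — the choice $r=\nu+2A$ being precisely what keeps $\dot{\mathcal{E}}_{\kin}$ out of the computation. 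Apply it finally to $A$ itself, which obeys $\dot A + 4\tau^{-2}A = 2\tau^{-2}r$: now $k=4\tau^{-2}$ with $\int_0^\infty 4\tau^{-2}\,dt=\infty$ (crucially, the slow time scale still provides full equilibration) and $f/k=r/2\to 0$, whence $A\to 0$, i.e. $m_2(t)\to N/2 = \int_\R y^2\Gamma$. Collecting the three moments yields the Proposition.
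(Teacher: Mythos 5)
Your argument is correct, and it reaches exactly the paper's intermediate moment system before diverging in how that system is analyzed. Your pair $(A,\nu)=(m_2-N/2,\int yM)$ is the paper's $(J_1,J_2)$ from Proposition \ref{second_moment_prop}, with the same forcing $\mathcal{E}_{\kin}$; but where the paper sets $\tilde J_1=\tau J_1$, derives the second-order equation $\ddot{\tilde J}_1+\dot{\tilde J}_1+\tau^{-2}\tilde J_1=2\tau^{-3}\int M^2/R$, replaces $\tau^{-2}$ by $1/(4t)$ to land on a Kummer-type equation, and concludes $\tilde J_1=o(\sqrt t)$ via confluent hypergeometric fundamental solutions, variation of constants and a bootstrap, you stay with the first-order system: the Lyapunov functional $V=A^2+\tau^{-2}\nu^2$ gives boundedness of $A$ and $\int_0^\infty\tau^{-2}\nu^2\,dt<\infty$ (your integrability of $\tau^{-2}\mathcal{E}_{\kin}^2$ is exactly what \eqref{equation_lemma_2} and the $L^\infty$ bound on $\mathcal{E}_{\kin}$ from Lemma \ref{lemma_entropy} provide), and then a Toeplitz-type relaxation lemma applied first to the fast variable $r=\nu+2A$ and then to the slow variable $A$ with the non-integrable damping $4\tau^{-2}$ yields $A\to0$. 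This is more elementary — no special functions, no comparison ODE, no Gronwall bootstrap — and it isolates the actual mechanism (slow--fast equilibration under a vanishing but non-integrable coupling); what it gives up is the quantitative control the paper's route affords, which Remark \ref{gaussian_second_moment_remark} uses to extract an $O(t^{-1/2})$ rate for Gaussian data. Two small repairs: the coefficient $1-4\tau^{-2}$ in the $r$-equation is negative near $t=0$ (since $\tau(0)=1$), so the relaxation lemma must be launched from some $t_0$ with $\tau(t_0)^2>4$, harmless because $\tau\to\infty$; and the identity $\int R=\int\Gamma$ rests on the normalization $\|\rho_0\|_{L^1}=\sqrt\pi$, which the paper also assumes implicitly in its Csiszár--Kullback step. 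Your treatment of the first moment through the physical variables is cleaner than, and consistent with, the paper's computation with $I_1=\int M$ and $I_2=\int yR$.
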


\begin{proposition} \label{L1_weak_convergence_prop}
Under Assumption \ref{weak_solution_assumption}, we have
\[ R(t) \underset{t \to \infty}{\rightharpoonup} \Gamma  \text{ weakly in } L^1(\R). \]
\end{proposition}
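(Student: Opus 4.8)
The plan is to establish relative weak sequential compactness of the family $\{R(t)\}_{t\ge0}$ in $L^1(\R)$ by the Dunford--Pettis criterion, and then to identify the unique possible weak limit point as $\Gamma$; by the Eberlein--\v{S}mulian theorem a relatively weakly compact family with a single limit point converges, which yields the claim. Boundedness in $L^1$ is free from the conservation $\|R(t)\|_{L^1}=\|\rho_0\|_{L^1}$. For equi-integrability (no concentration) I would use only the uniform relative-entropy bound $0\le\int_\R R\log(R/\Gamma)\,dy\le C$, which follows from \eqref{scaled_eq_energy} since $\mathcal{E}(t)\le C$ and $\int R\log(R/\Gamma)=\mathcal{E}(t)-\tfrac12\mathcal{E}_{\kin}(t)\le\mathcal{E}(t)$: writing $\int_E R=\int_E (R/\Gamma)\,\Gamma$ and splitting according to whether $R/\Gamma\le K$ or $>K$, the first piece is $\le K\int_E\Gamma$ and the second is $\le(\log K)^{-1}\int_{\{R>K\Gamma\}}R\log(R/\Gamma)\,dy\le C/\log K$, both uniformly small for $K$ large and $|E|$ small.

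For tightness (no escape of mass to infinity) I would derive a uniform second-moment bound $\int_\R y^2R\,dy\le C$ directly from the entropy bound via the Gibbs variational inequality with reference $\Gamma\,dy$: testing with $\phi=\lambda y^2$ for $\lambda\in(0,1)$ gives $\lambda\int y^2R\le\int R\log(R/\Gamma)\,dy+\log\int e^{\lambda y^2}\Gamma\,dy$, and $\int e^{\lambda y^2}\Gamma\,dy=\int e^{-(1-\lambda)y^2}\,dy<\infty$, so $\int y^2R\le C$ uniformly in $t$ (up to the fixed mass factor); Markov's inequality then gives $\int_{|y|>N}R\le CN^{-2}$ uniformly. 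Equi-integrability and tightness together give, by Dunford--Pettis, that from any sequence $t_n\to\infty$ one can extract a subsequence with $R(t_n)\rightharpoonup R_\infty$ weakly in $L^1(\R)$. Testing against the constant $1\in L^\infty$ and using tightness shows $\int R_\infty=\lim_n\int R(t_n)=\|\rho_0\|_{L^1}=\int\Gamma$, so $R_\infty$ carries the correct mass.

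It remains to show $R_\infty=\Gamma$. The natural target is the equilibrium relation $\partial_yR_\infty+2yR_\infty=0$, whose only nonnegative $L^1$ solution with mass $\int\Gamma$ is $\Gamma$. To reach it I would pass to the limit in \eqref{hydro_renormalized}: the prefactors $1/\tau^2$ and $\dot\tau/\tau$ tend to $0$, and the flux $F:=M/\tau^2$ satisfies $\|F(t)\|_{L^1}\le\|\rho_0\|_{L^1}^{1/2}\mathcal{E}_{\kin}(t)^{1/2}/\tau(t)\to0$ uniformly (using $\mathcal{E}_{\kin}\le C$), so \eqref{continuity_renormalized} forces the limit to be stationary in $t$. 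Moreover, integrating \eqref{continuity_renormalized} against $\phi\in C_c^\infty$ and using Cauchy--Schwarz yields $|\int R(t+h)\phi-\int R(t)\phi|\lesssim\tau(t)^{-1}(\int_t^{t+h}\mathcal{E}_{\kin}\,ds)^{1/2}\to0$ for every fixed $h$, so the subsequential limit is invariant under finite time shifts; one then selects the sequence along which $\mathcal{E}_{\kin}(t_n)\to0$ (which exists since $\int_0^\infty\frac{\dot\tau}{\tau}\mathcal{E}_{\kin}\,dt<\infty$) in order to render the inertial and convective contributions of \eqref{fluid_renormalized} negligible in the limiting force balance.

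The main obstacle is precisely this force balance in \eqref{fluid_renormalized}: although the flux $F=M/\tau^2$ is small, the scaled momentum $M$ itself is only controlled by $\int_\R|M|\lesssim\tau\,\mathcal{E}_{\kin}^{1/2}=O(\sqrt t)$, so the damping term $+M$ and the inertia $\partial_tM$ are individually large and only their combination is small; they cannot be discarded termwise, and the boundary contributions $\int M\psi$ obstruct a naive integration of \eqref{fluid_renormalized} over a time window. I would therefore argue through entropy rather than a pointwise balance: since $R\mapsto\int_\R R\log(R/\Gamma)\,dy$ is convex it is weakly lower semicontinuous, so $\int R_\infty\log(R_\infty/\Gamma)\le\liminf_n\int R(t_n)\log(R(t_n)/\Gamma)$, and the crux is to show this relative entropy has vanishing $\liminf$ along the dissipative sequence $t_n$. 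For this one computes the production $\frac{d}{dt}\int R\log(R/\Gamma)\,dy=\frac{1}{\tau^2}\int\frac{M}{R}(\partial_yR+2yR)\,dy$ from \eqref{continuity_renormalized}--\eqref{fluid_renormalized}, combines it with the quasi-stationary balance $M\approx-(\partial_yR+2yR)$ valid as $t\to\infty$ to turn the production into $-\tau^{-2}$ times the relative Fisher information, and invokes the logarithmic Sobolev inequality for $\Gamma$. Once $\int R_\infty\log(R_\infty/\Gamma)=0$ is obtained, the Csisz\'ar--Kullback inequality forces $R_\infty=\Gamma$; as every weak limit point equals $\Gamma$, Eberlein--\v{S}mulian upgrades subsequential to full weak convergence $R(t)\rightharpoonup\Gamma$.
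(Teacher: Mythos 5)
Your compactness step is sound and essentially matches the paper: uniform $L^1$ bound, equi-integrability from the relative entropy, tightness from the second moment (your Gibbs-variational derivation of $\sup_t\int y^2R\,dy<\infty$ is a legitimate alternative to the paper's interpolation argument in Lemma \ref{lemma_entropy}), then Dunford--Pettis. The genuine gap is in the identification of the limit. Your entropy route rests on the ``quasi-stationary balance'' $M\approx-(\partial_yR+2yR)$, which you invoke to give the entropy production $\frac{d}{dt}\int R\log(R/\Gamma)=\tau^{-2}\int\frac{M}{R}(\partial_yR+2yR)\,dy$ a sign. But that balance is exactly the statement that $\partial_tM$, the convection term and the cross terms are negligible --- the obstacle you yourself flag one paragraph earlier --- and nothing in the available estimates ($\int|M|\lesssim\tau\,\mathcal{E}_{\kin}^{1/2}=O(\sqrt t)$, no pointwise-in-time control of $\partial_tM$) delivers it. Without it the production term has no sign and you cannot conclude that the relative entropy, or even its $\liminf$, vanishes. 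There is a second, independent gap in the final upgrade: you identify the limit only along a sequence chosen so that $\mathcal{E}_{\kin}(t_n)\to0$, but full weak convergence requires identifying the limit of an \emph{arbitrary} weakly convergent subsequence. Your time-shift estimate $|\int R(t+h)\phi-\int R(t)\phi|\lesssim h/\tau(t)$ only controls bounded shifts, whereas transporting an arbitrary time $t_n$ to a nearby dissipative time requires shifts of order $t_n$ (since $\int\mathcal{E}_{\kin}(s)\,ds/(1+s)<\infty$ only locates good times in dyadic windows $[t_n,2t_n]$), over which the estimate degrades like $\sqrt{t_n}$. So ``every weak limit point equals $\Gamma$'' is not established.

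The paper avoids both difficulties by eliminating $M$ altogether: differentiating \eqref{fluid_renormalized} in $y$ and substituting \eqref{continuity_renormalized} gives a closed equation $-\partial_t(\tau^2\partial_tR)-\tau^2\partial_tR+LR=-\tau^{-2}\partial_y(M^2/R)$ with $L=\partial_y^2+2\partial_y(y\,\cdot)$; after the time rescaling $\partial_s=\tau^2\partial_t$ every problematic term (the second-order-in-time term, the Reynolds-type term) carries a $\tau^{-2}$ prefactor and vanishes weakly in $\mathcal{D}'$ on a fixed window $s\in(-1,2)$, yielding the Fokker--Planck equation $\partial_s\tilde R_\infty=L\tilde R_\infty$ for any sequence $s_n\to\infty$. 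The dissipation integral \eqref{scaled_equation_lemma_2} then gives $\tilde M\in L^2(-1,2;L^1(\R))$, hence $\partial_s\tilde R_\infty=0$, so $\tilde R_\infty$ is the stationary state $\Gamma$; uniqueness of the limit point removes the subsequence. Note this argument never needs a dissipative sequence nor Assumption \ref{f_to_zero_assump}. If you want to salvage your entropy strategy, you would first have to prove the quasi-stationary balance in a time-averaged sense, which is essentially as hard as the distributional limit the paper performs.
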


\begin{remark}
In Proposition \ref{convergence_momentum_prop}, Assumption \ref{f_to_zero_assump} is only required to show the convergence of the moment of order 2. Without this assumption, we are only able to show that the moment of order 2 is uniformly bounded (see Lemma \ref{lemma_entropy}).
\end{remark}

\begin{remark}
Note that Propositions \ref{convergence_momentum_prop} and \ref{L1_weak_convergence_prop} are also satisfied in higher space dimensions $\R^d$ with $d \geq 1$, in the sense that
\[  \int_{\R^d} \left( \begin{array}{c} 1 \\ y \\ |y|^2 \end{array} \right) R(t,y) dy \longrightarrow \int_{\R^d} \left( \begin{array}{c} 1 \\ y \\ |y|^2 \end{array} \right) \Gamma(y) dy \ \ \ \text{as} \ t \rightarrow \infty   \]
with $\Gamma = e^{-|y|^2}$, and
\[ R(t) \underset{t \to \infty}{\rightharpoonup} \Gamma  \text{ weakly in } L^1(\R^d). \]
Of course, our notion of global weak solution has to be adapted to the $d$-dimensional case, as well as system \eqref{EL}, and we refer to \cite{carles_rigidity} and \cite{chauleur_temps_long} for some $d$-dimensional analogue of Definition \ref{weak_solution_def}.\\
In the same vein, the following discussion about the particular Gaussian solutions of \eqref{EL} can easily be generalized to any dimension $d$ by a tensorization property. We also refer the reader to \cite{carles_rigidity} and \cite{chauleur_temps_long} for some $d$-dimensional analogue of system \eqref{system_gaussian_eq_1}-\eqref{system_gaussian_eq_2}-\eqref{system_gaussian_eq_3}.
\end{remark}

\section{The limit $\gamma \rightarrow 1$ of Barenblatt's solutions}
We consider the unique fundamental solution of the porous media equation (with the Dirac delta function as initial data)
\begin{empheq}[left=\empheqlbrace]{equation}
    \begin{aligned}
    & \partial_t \overline{\rho}_{\gamma} = \partial_x^2  \left[ \left( \overline{\rho}_{\gamma}\right)^{\gamma} \right], \\
     & \overline{\rho}_{\gamma}(-1,x) = \lambda \delta(x), \ \ \ \lambda>0,
   \end{aligned}
\end{empheq}
which is called the Barenblatt solution \cite{aronson1986}. Note that we take the initial data at $t=-1$ to avoid the singularity at $t=0$. We recall that this solution can be written 
\begin{equation} \label{barenblatt_explicit_eq}
 \overline{\rho}_{\gamma}(t,x) = (1+t)^{-\frac{1}{\gamma+1}} \left[ A-B \xi^2 \right]_+^{\frac{1}{\gamma-1}}   
 \end{equation}
with $\xi = x(1+t)^{-\frac{1}{\gamma+1}}$, $\left[ f \right]_+ = \max \left\{f,0 \right\}$, $B=\frac{\gamma-1}{2 \gamma (\gamma+1)}$ and A determined by
\begin{equation} \label{expression_A}
 2 A^{\frac{\gamma+1}{2(\gamma-1)}} B^{-\frac{1}{2}} \int_0^{\frac{\pi}{2}} (\cos \theta )^{\frac{\gamma+1}{\gamma-1}} d \theta = \lambda.    
 \end{equation} 
Moreover, $\overline{\rho}_{\gamma}$ is continuous on $\R$, it satisfies the conservation of mass
\[ \int_{\R} \overline{\rho}_{\gamma}(t,x) dx = \lambda\]
for all $t \geq -1$, and has compact support for any finite time $T>0$, namely
\[ \overline{\rho}_{\gamma}=0 \ \ \ \text{if} \ \ \ |\xi| \geq \sqrt{A/B}.      \]
The power $\frac{1}{\gamma-1}$ in \eqref{barenblatt_explicit_eq} could make the limit $\gamma \rightarrow 1$ unclear, however we can show:

\begin{proposition}
We make the change of variable $\xi= x(1+t)^{-\frac{1}{\gamma+1}}$ and
\[ \overline{\rho}_{\gamma} (t,x) = (1+t)^{-\frac{1}{\gamma+1}} \mathcal{B}_{\gamma} \left(x(1+t)^{-\frac{1}{\gamma+1}} \right) \]
which preserves the $L^1$-norm. Then,
\[ \mathcal{B}_{\gamma} \rightarrow \frac{\lambda}{2\sqrt{\pi}} e^{-\frac{\xi^2}{4}} \ \ \ \text{in} \ L^{\infty}(\R) \ \text{as} \ \gamma \rightarrow 1 .\]
In particular, the $L^{\infty}$ estimate
\[ |\overline{\rho}_{\gamma}(t,x)| \leq A^{\frac{1}{\gamma -1}} (1+t)^{-\frac{1}{\gamma+1}}    \]
has the following continuous limit when $\gamma \rightarrow 1$:
\[ |\overline{\rho}(t,x)| \leq \frac{\lambda}{2\sqrt{\pi}} (1+t)^{-\frac{1}{2}},    \]
where $\overline{\rho}$ denotes the Gaussian limit of the Barenblatt profile $\overline{\rho}_{\gamma}$ as $\gamma \rightarrow 1$ (namely $\overline{\rho}:=\overline{\rho}_1$, we omit the index $\gamma$ when $\gamma =1$).
\end{proposition}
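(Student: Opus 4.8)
The plan is to work directly with the rescaled profile $\mathcal{B}_\gamma(\xi) = [A - B\xi^2]_+^{\frac{1}{\gamma-1}}$ and to factor out its value at the origin by writing
\[ \mathcal{B}_\gamma(\xi) = A^{\frac{1}{\gamma-1}}\left[1 - \tfrac{B}{A}\xi^2\right]_+^{\frac{1}{\gamma-1}}. \]
Everything then reduces to two asymptotic computations as $\gamma\to 1$: the behaviour of the multiplicative constant $A^{\frac{1}{\gamma-1}}$, which I expect to converge to $\frac{\lambda}{2\sqrt\pi}$, and the behaviour of the ``shape factor'' $g_\gamma(\xi) := [1 - \tfrac{B}{A}\xi^2]_+^{\frac{1}{\gamma-1}}$, which I expect to converge to the Gaussian $e^{-\xi^2/4}$. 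The two required inputs are the precise rates of $B$ and $A$ as $\gamma \to 1$.

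First I would record the elementary asymptotics of $B$. Since $B = \frac{\gamma-1}{2\gamma(\gamma+1)}$, we have $B \to 0$ with $B \sim \frac{\gamma-1}{4}$. For $A$, I would start from the normalisation \eqref{expression_A} and set $p := \frac{\gamma+1}{\gamma-1}$, so that $p \to +\infty$ and $\frac{\gamma+1}{2(\gamma-1)} = p/2$. The Wallis integral then satisfies, by Laplace's method near $\theta=0$ (or equivalently by Stirling's formula applied to $\int_0^{\pi/2}\cos^p\theta\,d\theta = \frac{\sqrt\pi}{2}\,\Gamma(\frac{p+1}{2})/\Gamma(\frac{p}{2}+1)$),
\[ \int_0^{\pi/2}(\cos\theta)^p\,d\theta \sim \sqrt{\frac{\pi}{2p}} \qquad (p\to\infty). \]
Substituting into \eqref{expression_A} gives
\[ A^{p/2} = \frac{\lambda\,B^{1/2}}{2\int_0^{\pi/2}(\cos\theta)^p\,d\theta} \longrightarrow \frac{\lambda}{2\sqrt\pi}, \]
after a short computation using $B^{1/2}\sim \frac{1}{2}\sqrt{\gamma-1}$ and $\sqrt{\pi/(2p)} \sim \frac{1}{2}\sqrt{\pi(\gamma-1)}$. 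Since $A^{\frac{1}{\gamma-1}} = (A^{p/2})^{\frac{2}{\gamma+1}}$ and $\frac{2}{\gamma+1}\to 1$, this yields both $A \to 1$ and $A^{\frac{1}{\gamma-1}} \to \frac{\lambda}{2\sqrt\pi}$.

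Next I would obtain the pointwise limit of $g_\gamma$. For fixed $\xi$ and $\gamma$ close to $1$ we have $\frac{B}{A}\xi^2 < 1$, and taking logarithms,
\[ \frac{1}{\gamma-1}\log\!\left(1 - \tfrac{B}{A}\xi^2\right) = -\frac{B}{A(\gamma-1)}\,\xi^2 + O\!\left(\frac{B^2}{\gamma-1}\xi^4\right) \longrightarrow -\frac{\xi^2}{4}, \]
because $\frac{B}{A(\gamma-1)} = \frac{1}{2\gamma(\gamma+1)A}\to\frac14$ while $\frac{B^2}{\gamma-1}\to 0$. Hence $g_\gamma(\xi)\to e^{-\xi^2/4}$ pointwise, and combined with the previous step $\mathcal{B}_\gamma(\xi)\to\frac{\lambda}{2\sqrt\pi}e^{-\xi^2/4}$ for every $\xi$.

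The main obstacle is to upgrade this pointwise convergence to convergence in $L^\infty(\R)$; note that the support $\{|\xi|\le\sqrt{A/B}\}$ of $\mathcal{B}_\gamma$ expands to all of $\R$ since $\sqrt{A/B}\sim 2/\sqrt{\gamma-1}\to\infty$. To handle this I would exploit monotonicity: for $\xi\ge 0$ each $g_\gamma$ is nonincreasing, decreasing from $g_\gamma(0)=1$ to $0$, and the limit $e^{-\xi^2/4}$ is continuous and nonincreasing with the same endpoint values. Pointwise convergence of monotone functions to a continuous monotone limit is automatically uniform (a Pólya-type argument), so $\|g_\gamma - e^{-\xi^2/4}\|_{L^\infty([0,\infty))}\to 0$, and by evenness the same holds on $\R$; alternatively one splits $\R$ into a compact set $\{|\xi|\le K\}$, where the monotonicity argument applies, and its complement, where both functions are uniformly small for $K$ large. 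Finally, writing
\[ \left\|\mathcal{B}_\gamma - \tfrac{\lambda}{2\sqrt\pi}e^{-\xi^2/4}\right\|_{L^\infty} \le A^{\frac{1}{\gamma-1}}\,\|g_\gamma - e^{-\xi^2/4}\|_{L^\infty} + \left|A^{\frac{1}{\gamma-1}} - \tfrac{\lambda}{2\sqrt\pi}\right|, \]
and using that $A^{\frac{1}{\gamma-1}}$ is bounded and converges to $\frac{\lambda}{2\sqrt\pi}$, both terms vanish as $\gamma\to 1$, which proves the $L^\infty$ convergence. The stated pointwise bound on $\overline{\rho}_\gamma$ and its limit then follow from $\mathcal{B}_\gamma(\xi)\le A^{\frac{1}{\gamma-1}}\to\frac{\lambda}{2\sqrt\pi}$ together with the mass-preserving change of variables.
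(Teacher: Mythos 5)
Your proposal is correct and follows the same core computations as the paper: the same factorisation $\mathcal{B}_\gamma = A^{\frac{1}{\gamma-1}}\bigl[1-\tfrac{B}{A}\xi^2\bigr]_+^{\frac{1}{\gamma-1}}$, the same Wallis-integral asymptotics giving $A^{\frac{1}{\gamma-1}}\to\frac{\lambda}{2\sqrt{\pi}}$, the same expansion $\tfrac{B}{A}=\tfrac{\gamma-1}{4}+O((\gamma-1)^2)$, and the same logarithmic expansion for the pointwise limit of the shape factor. Where you genuinely diverge is in upgrading pointwise to uniform convergence. The paper argues that the error inside the support is $\frac{\lambda}{2\sqrt{\pi}}e^{-\xi^2/4}\bigl(\frac{1}{1+O(\eps)}e^{O(\eps)}-1\bigr)=O(\eps)$ and treats the exterior $|\xi|\ge\sqrt{A/B}$ separately; but the $e^{O(\eps)}$ there is not uniform in $\xi$ up to the edge of the support, since $\log\bigl(1-\tfrac{B}{A}\xi^2\bigr)\to-\infty$ as $\xi^2\uparrow A/B$, so the series remainder $\eps\sum_{n\ge 2}\frac{1}{n}(\xi^2/4)^n\eps^{n-2}$ is only controlled on sets like $|\xi|\le K$ or $|\xi|\le c\,\eps^{-1/2}$ with $c<2$. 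Your monotonicity (P\'olya-type) argument, or equivalently the split into a compact set where the expansion is uniform and a tail where both $g_\gamma$ and the Gaussian are uniformly small by monotonicity, closes exactly this gap; it costs you the explicit $O(\gamma-1)$ rate that the paper asserts on compact sets, but it delivers the stated $L^\infty(\R)$ convergence cleanly. The final deduction of the $L^\infty$ bound on $\overline{\rho}_\gamma$ and its limit is the same in both versions.
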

\begin{proof}
We have,  for $|\xi| <\sqrt{A/B}$,
\begin{equation*}
    \mathcal{B}_{\gamma}(\xi) = \left[ A - B \xi^2   \right]_+^{\frac{1}{\gamma-1}} 
    = \exp \left(  \frac{1}{\gamma-1} \log A + \frac{1}{\gamma-1} \log \left( 1 - \frac{B}{A} \xi^2 \right)   \right),
\end{equation*}
as $A >0$ from expression \eqref{expression_A} and $\frac{B}{A} \xi^2 < 1$. We have now two terms to handle when $\gamma \rightarrow 1$. Taking the logarithm in equation \eqref{expression_A}, and recalling that $B=\frac{\gamma-1}{2 \gamma (\gamma+1)}$, we have
\[ \frac{1}{\gamma-1} \log A = \frac{2}{\gamma+1} \left( \log \left( \frac{\lambda}{2 \sqrt{2 \gamma (\gamma+1)}} \right) -  \log \left( \frac{1}{\sqrt{\gamma-1}}   \int_0^{\frac{\pi}{2}} (\cos \theta )^{\frac{\gamma+1}{\gamma-1}} d \theta \right)\right).    \]
We denote $\eps = \gamma -1$, so we look at the limit $\eps \rightarrow 0$ of the following Wallis integral
\[  W_{1+\frac{2}{\eps}} = \int_0^{\frac{\pi}{2}} (\cos \theta)^{1+\frac{2}{\eps}} d \theta = \int_0^{\frac{\pi}{2}} (\cos \theta )^{\frac{\gamma+1}{\gamma-1}} d \theta. \]
We recall the well-known equivalent of Wallis integral when $\eps \rightarrow 0$,
\[ W_{1+\frac{2}{\eps}} = \sqrt{\frac{\pi}{2\left( 1+ \frac{2}{\eps} \right)}} + O \left( \frac{1}{1+ \frac{2}{\eps}} \right) =  \sqrt{\frac{\pi \eps}{4+2\eps}} + O(\eps),  \]
so it is easy to check that
\[ \frac{1}{\sqrt{\gamma-1}}   \int_0^{\frac{\pi}{2}} (\cos \theta )^{\frac{\gamma+1}{\gamma-1}} d \theta =  \sqrt{\frac{\pi}{4}} \left( 1+O(\eps)  \right)\]
as $\gamma \rightarrow 1$, and then
\[ \exp \left( \frac{1}{\gamma-1} \log A \right) = \frac{\lambda}{2\sqrt{\pi}} \frac{1}{1+O(\eps)}.   \]
For the second term, as we have 
\[ \frac{B}{A} = B \left( \frac{2 W_{1+\frac{2}{\eps}}}{\lambda B^{\frac{1}{2}}}  \right)^{\frac{2 \eps}{2+\eps}} = \frac{\eps}{4} + O(\eps^2)   \]
from the previous equivalent, we write
\[ \frac{1}{\eps} \log \left( 1 - \frac{B}{A} \xi^2 \right) = - \frac{1}{\eps} \sum_{n \geq 1} \frac{1}{n} \left( \frac{B}{A} \xi^2    \right)^n \sim -\frac{1}{4} \xi^2 + \eps \sum_{n \geq 2}  \frac{1}{n} \left( \frac{\xi^2}{4}    \right)^n \eps^{n-2}, \]
so finally 
\[ \exp \left( \frac{1}{\gamma-1} \log \left( 1 - \frac{B}{A} \xi^2 \right) \right) \longrightarrow e^{-\frac{\xi^2}{4}},   \]
and 
\[  \mathcal{B}_{\gamma} \left( \xi \right) - \frac{\lambda}{2\sqrt{\pi}} e^{-\frac{\xi^2}{4}} = \frac{\lambda}{2\sqrt{\pi}} e^{-\frac{\xi^2}{4}} \left( \frac{1}{1+O(\eps)} e^{O(\eps)} -1   \right) = O(\eps). \]
On $|\xi| \geq \sqrt{A/B}$, we know that $ \mathcal{B}_{\gamma} \left( \xi \right)=0$, and as $\xi \rightarrow e^{- \xi^2/4}$ is a Gaussian, 
\[    \sup_{|\xi| \geq \sqrt{A/B}} \left|  \mathcal{B}_{\gamma} \left( \xi \right) - \frac{\lambda}{2\sqrt{\pi}} e^{-\frac{\xi^2}{4}}  \right| = \frac{\lambda}{2\sqrt{\pi}} e^{-\frac{A}{B}}  \leq C e^{-\frac{1}{\eps}},\]
so finally
\[  \sup_{\xi \in \R} \left|  \mathcal{B}_{\gamma} \left( \xi \right) - \frac{\lambda}{2\sqrt{\pi}} e^{-\frac{\xi^2}{4}}  \right| \leq C e^{-\frac{1}{\gamma -1}} + O(\gamma-1) \longrightarrow 0 \ \text{as} \ \gamma \rightarrow 1,   \]
which ends the proof.
\end{proof}

In order to illustrate this result, we make the following numerical simulation of the function $\mathcal{B}_{\gamma}$ for several values of $\gamma$ (namely $\gamma=2$, $1.5$ and $1.1$), and the initial condition constant $\lambda=1$. We also plot the limit Gaussian profile $e^{-\xi^2/4}/2\sqrt{\pi}$, which corresponds to the case $\gamma=1$.

\begin{figure}[h]
	\centering
		\includegraphics[width=0.85\textwidth, clip]{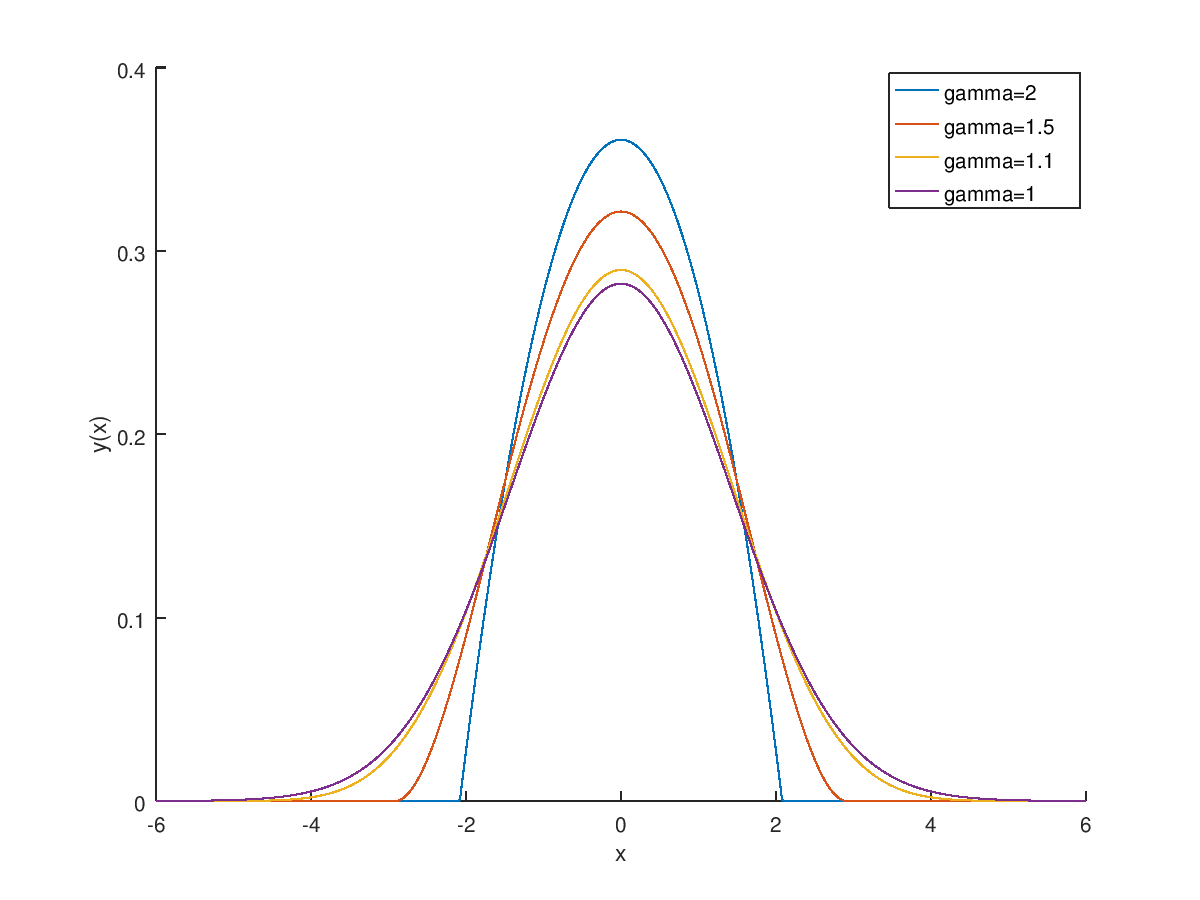}	
	\caption{Convergence of $\mathcal{B}_{\gamma}$ towards its limit Gaussian profile.}
	\label{fig:barenblatt_to_gaussian}
\end{figure}

\section{Gaussian solutions}
In this section, following \cite{carles_rigidity}, we seek for particular Gaussian solutions of \eqref{EL} of the form
\[ \overline{\rho}(t,x)= b(t) e^{-\alpha(t) (x-\overline{x}(t))^2}  \]
and 
\[ \overline{m}(t,x)= (\beta(t) x + c(t) ) \overline{\rho}(t,x),    \]
with the initial conditions $b(0)=b_0 >0$, $\alpha(0)=\alpha_0 >0$, $\beta(0)=\beta_0 \in \R $, $c(0)=c_0 \in \R$. As system \eqref{EL} is invariant by translation, we assume that $\overline{x}(0)=0$. We also denote $\rho_0=\overline{\rho}(0,.)$ and $m_0=\overline{m}(0,.)$. Plugging these expressions into \eqref{EL}, we obtain the following set of differential equations:
\begin{equation} \label{system_gaussian_eq_1}
    \dot{\alpha}+2 \alpha b=0, \ \ \ \dot{\beta}+\beta^2+\beta=2 \alpha,\\
\end{equation}
\begin{equation}  \label{system_gaussian_eq_2}
    \dot{\overline{x}}=\beta \overline{x} +c, \ \ \  \dot{b}=b(\dot{\alpha} \overline{x}^2 +2 \alpha c\overline{x} (\dot{\overline{x}}-c) -\beta),
\end{equation}
\begin{equation}  \label{system_gaussian_eq_3}
     \dot{c}+\beta c +c = -2 \alpha \overline{x}.
\end{equation}

In order to solve this system, mimicking \cite{li_wang_2006}, we can check that the two equations of \eqref{system_gaussian_eq_1} are satisfied if and only if $\alpha$ and $\beta$ are of the form
\[ \alpha(t)= \frac{\alpha_0}{\tau(t)^2}, \ \ \ \beta(t) = \frac{\dot{\tau}(t)}{\tau(t)}, \]
where  $\tau$ is the global solution of the differential equation
\begin{equation} \label{eq_tau}
\ddot{\tau} = \frac{ 2 \alpha_0 }{\tau} - \dot{\tau}, \ \ \ \tau(0)=1, \ \ \ \dot{\tau}(0)=\beta_0.
\end{equation}
We recall (see \cite{chauleur_temps_long}) that there exists a unique global solution $\tau \in \mathcal{C}^{\infty}(\left[ 0,\infty \right))$ to this nonlinear ODE, and that this solution remains uniformly bounded from below by a strictly positive constant. \\ Plugging these expressions into the second equation of \eqref{system_gaussian_eq_2}, we also get that
\[ b(t) = \frac{b_0}{\tau(t)}. \]
We also get an expression of $c$ in terms of the center $\overline{x}$ of our Gaussian:
\[ c(t)=c_0 -\left(1+\frac{\dot{\tau}}{\tau}\right) \overline{x}.   \]
Let first show that the center of the Gaussian has an explicit expression, which does not depend on the function $\tau$:
\begin{proposition}
We have
\[ \overline{x}(t) = \frac{1}{\| \rho_0\|_{L^1(\R)}} \left( \int_{\R} x \rho_0 - (1-e^{-t}) \int_{\R} m_0    \right) .   \]
In particular, there exists $\overline{x}_{\infty} \in \R$ such that
\[ \overline{x}(t) \rightarrow  \overline{x}_{\infty} \ \ \ \text{as} \ \ \ t \rightarrow \infty.  \]
\end{proposition}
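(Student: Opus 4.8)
The plan is to bypass the function $\tau$ entirely and read off $\overline{x}(t)$ directly from the conservation laws, using that for a Gaussian the center coincides with the normalized first moment. Since $\overline{\rho}(t,\cdot)=b(t)\,e^{-\alpha(t)(x-\overline{x}(t))^2}$ is symmetric about $x=\overline{x}(t)$, we have $\int_\R x\,\overline{\rho}(t,x)\,dx = \overline{x}(t)\,\|\overline{\rho}(t,\cdot)\|_{L^1(\R)}$, and the change of variables recalled above preserves the $L^1$-norm, so that $\|\overline{\rho}(t,\cdot)\|_{L^1(\R)}=\|\rho_0\|_{L^1(\R)}$. Thus it suffices to track the two scalar quantities $Q(t):=\int_\R x\,\overline{\rho}\,dx$ and $P(t):=\int_\R \overline{m}\,dx$, and then to set $\overline{x}(t)=Q(t)/\|\rho_0\|_{L^1(\R)}$.

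First I would differentiate $Q$ in time and use the continuity equation \eqref{continuityEL}; a single integration by parts (with vanishing boundary term) gives the identity $\dot Q = P$. Then I would integrate the momentum equation \eqref{fluidEL} over $\R$: the flux terms $\partial_x(\overline{m}^2/\overline{\rho})$ and $\partial_x\overline{\rho}$ are exact $x$-derivatives and contribute nothing, which leaves the decoupled linear ODE $\dot P + P = 0$. Hence $P(t)=e^{-t}P(0)=e^{-t}\int_\R m_0$, and feeding this back into $\dot Q = P$ and integrating from $0$ to $t$ (with $Q(0)=\int_\R x\rho_0$) produces $Q$ in closed form; dividing by $\|\rho_0\|_{L^1(\R)}$ gives the stated expression for $\overline{x}(t)$. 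The convergence $\overline{x}(t)\to\overline{x}_\infty$ as $t\to\infty$ is then immediate, the only remaining $t$-dependence being the factor $e^{-t}\to 0$.

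As a consistency check that also explains why $\tau$ drops out, I would instead start from the first equation of \eqref{system_gaussian_eq_2}, namely $\dot{\overline{x}}=\beta\overline{x}+c$, and substitute the relations $\beta=\dot\tau/\tau$ and $c=c_0-(1+\dot\tau/\tau)\overline{x}$ already obtained: the $\dot\tau/\tau$ contributions cancel and one is left with the $\tau$-free equation $\dot{\overline{x}}+\overline{x}=c_0$. Solving with $\overline{x}(0)=0$ and identifying $c_0$ through $\int_\R m_0 = \beta_0\int_\R x\rho_0 + c_0\,\|\rho_0\|_{L^1(\R)}$ recovers the same formula. The only delicate point is the vanishing of the boundary terms in the integrations by parts, but for these explicit solutions it is automatic, since $\overline{\rho}$ and $x\overline{\rho}$ are Schwartz in $x$ for each fixed $t$; I therefore expect no genuine obstacle here (at the level of arbitrary weak solutions the same computation would instead lean on the bound $x^2\rho\in L^\infty([0,T[;L^1(\R))$ from Assumption \ref{weak_solution_assumption}).
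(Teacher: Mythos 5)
Your route is essentially the paper's: identify $\overline{x}(t)$ with the normalized first moment of the Gaussian, integrate \eqref{fluidEL} to get $\frac{d}{dt}\int_\R \overline m = -\int_\R \overline m$, hence $\int_\R\overline m(t)=e^{-t}\int_\R m_0$, and feed this into the evolution of $\int_\R x\overline\rho$; your second derivation via $\dot{\overline x}=\beta\overline x+c$ is an extra consistency check that the paper does not carry out, and it is a nice way of making visible why $\tau$ drops out. There is, however, one point you should settle before claiming the stated formula. You correctly write $\dot Q=+P$: since $\int_\R x\,\partial_x\overline m\,dx=-\int_\R\overline m\,dx$, the continuity equation \eqref{continuityEL} gives $\frac{d}{dt}\int_\R x\overline\rho=+\int_\R\overline m$. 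But integrating $\dot Q=e^{-t}\int_\R m_0$ from $0$ to $t$ then yields
$\overline x(t)=\frac{1}{\|\rho_0\|_{L^1(\R)}}\bigl(\int_\R x\rho_0+(1-e^{-t})\int_\R m_0\bigr)$,
with a \emph{plus} sign in front of $(1-e^{-t})\int_\R m_0$, not the minus sign in the statement; so your assertion that this ``produces the stated expression'' does not follow from your own (correct) identity $\dot Q=P$. Your consistency check points the same way: with $\overline x(0)=0$ one has $\int_\R x\rho_0=0$ and $c_0=\int_\R m_0/\|\rho_0\|_{L^1(\R)}$, and solving $\dot{\overline x}+\overline x=c_0$ gives $\overline x(t)=c_0(1-e^{-t})$, again carrying the sign of $\int_\R m_0$ (physically sensible: positive total momentum should push the center to the right). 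The discrepancy originates in the paper's own proof, which asserts $\int_\R x\,\partial_t\overline\rho=-\int_\R\overline m$, dropping a sign in the integration by parts. The qualitative conclusion $\overline x(t)\to\overline x_\infty$ is of course unaffected.
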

\begin{proof}
We know that $\overline{\rho}$ is a Gaussian function centered in $\overline{x}$, so we get that for all $t \geq 0$, 
\[ \int_{\R} (x - \overline{x}(t)) \overline{\rho}(t,x) dx =0, \]
hence we only have to study the first moment of $\overline{\rho}$ keeping in mind that \[\overline{x}(t) = \frac{1}{ \| \rho_0 \|_{L^1(\R)}} \int_{\R} x \overline{\rho}(t,x) dx. \]
Integrating equation \eqref{fluidEL} over $\R$ we get that
\[ \frac{d}{dt} \left(\int_{\R}  \overline{m}(t,x) dx \right)= \int_{\R}  \partial_t \overline{m}(t,x) dx = -  \int_{\R}  \overline{m}(t,x) dx,  \]
so we get by integration by parts that
\begin{equation*} 
\frac{d}{dt} \left(\int_{\R} x \overline{\rho}(t,x) dx \right)= \int_{\R} x  \partial_t \overline{\rho}(t,x) dx = - \int_{\R} \overline{m}(t,x) dx = - e^{- t} \int_{\R} m_0(x) dx,
\end{equation*} 
so finally integrating this expression over $\left[0, t \right]$ we get the result.
\end{proof}

In order to know the behavior of our Gaussian functions when $t \rightarrow \infty$, we only need to get some equivalents of the real function $\tau$, which is the goal of the following proposition. We refer to \cite{chauleur_temps_long} for a complete study of the differential equation \eqref{eq_tau}.

\begin{proposition} \label{center_gaussian}
Let  $\tau$ be the unique $\mathcal{C}^{\infty}(\left[ 0,\infty \right))$ solution of the differential equation \eqref{eq_tau}, then we have
\[ \tau(t) \sim 2 \sqrt{\alpha_0 t} \ \ \ \text{and} \ \ \ \dot{\tau}(t) \sim \sqrt{\frac{\alpha_0}{ t}}.    \]
In particular,
\[ \alpha(t) \sim \frac{1}{4 t}, \ \ \ \beta(t) \sim \frac{1}{2 t}, \ \ \    b(t) \sim \frac{b_0}{2 \sqrt{\alpha_0 t}} \ \ \ \text{and} \ \ \ c(t) \rightarrow c_0-\overline{x}_{\infty}. \]
\end{proposition}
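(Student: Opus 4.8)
The plan is to establish the two equivalences for $\tau$ and $\dot{\tau}$ first; every statement in the ``in particular'' part then follows by direct substitution into the closed-form expressions $\alpha=\alpha_0/\tau^2$, $\beta=\dot{\tau}/\tau$, $b=b_0/\tau$ and $c=c_0-(1+\dot{\tau}/\tau)\overline{x}$ derived above, together with the convergence $\overline{x}(t)\to\overline{x}_{\infty}$ from the previous proposition (in particular $\beta\to 0$, so $c\to c_0-\overline{x}_{\infty}$). The heart of the matter is thus the asymptotic analysis of the scalar ODE \eqref{eq_tau}.

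First I would introduce the change of unknown $u:=\tau^2$, which kills the singular term $2\alpha_0/\tau$: using \eqref{eq_tau} one gets the identity
\[ \ddot{u}+\dot{u}=4\alpha_0+2\dot{\tau}^2 . \]
The point is that the right-hand side is bounded below by $4\alpha_0>0$ and that the integrating factor $e^t$ makes the equation easy to handle. Writing $\frac{d}{dt}(e^t\dot{u})=e^t(4\alpha_0+2\dot{\tau}^2)\ge 4\alpha_0 e^t$ and integrating yields $\dot{u}(t)\ge 4\alpha_0(1-e^{-t})+2\beta_0 e^{-t}$, so $\liminf_{t\to\infty}\dot{u}\ge 4\alpha_0>0$ and hence $\tau=\sqrt{u}\to\infty$. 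Combined with the lower bound $\tau\ge c>0$ recalled from \cite{chauleur_temps_long}, this gives $1/\tau(t)\to 0$.

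The key tool for the next two steps is the elementary averaging lemma: if $g$ is locally integrable with $g(t)\to L$, then $e^{-t}\int_0^t e^s g(s)\,ds\to L$ (split the integral at a large fixed time and use $e^{-t}\int_0^t e^s\,ds\to 1$). Applying the integrating factor directly to \eqref{eq_tau} gives the representation
\[ \dot{\tau}(t)=\beta_0 e^{-t}+2\alpha_0\,e^{-t}\int_0^t \frac{e^s}{\tau(s)}\,ds, \]
and since $g=1/\tau\to 0$ the lemma yields $\dot{\tau}(t)\to 0$, hence $\dot{\tau}^2\to 0$. Feeding this back into $\frac{d}{dt}(e^t\dot{u})=e^t(4\alpha_0+2\dot{\tau}^2)$ and applying the lemma once more (now with $g=4\alpha_0+2\dot{\tau}^2\to 4\alpha_0$) gives $\dot{u}(t)\to 4\alpha_0$; integrating, $u(t)\sim 4\alpha_0 t$, i.e. $\tau(t)\sim 2\sqrt{\alpha_0 t}$. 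Finally $\dot{\tau}=\dot{u}/(2\tau)\sim 4\alpha_0/(4\sqrt{\alpha_0 t})=\sqrt{\alpha_0/t}$, which is the second equivalence.

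I expect the main obstacle to be the rigorous extraction of the sharp constant in $\tau\sim 2\sqrt{\alpha_0 t}$. The naive energy $E=\tfrac12\dot{\tau}^2-2\alpha_0\log\tau$ satisfies $\dot{E}=-\dot{\tau}^2\le 0$ but is \emph{not} bounded below (it may tend to $-\infty$ since $\tau\to\infty$), so one cannot conclude $\int_0^\infty\dot{\tau}^2<\infty$ from it directly; the decay $\dot{\tau}\to 0$ must instead be read off the representation formula above, and the ordering of the bootstrap ($\tau\to\infty$, then $\dot{\tau}\to 0$, then $\dot{u}\to 4\alpha_0$) is precisely what keeps the argument non-circular. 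The remaining ``in particular'' estimates are then immediate substitutions.
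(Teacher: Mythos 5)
The paper gives no proof of Proposition \ref{center_gaussian} at all: it simply defers to \cite{chauleur_temps_long} for the asymptotic study of the ODE \eqref{eq_tau}. Your argument is therefore a self-contained substitute rather than a variant of an in-paper proof, and as far as I can check it is correct. The identity $\ddot{u}+\dot{u}=4\alpha_0+2\dot{\tau}^2$ for $u=\tau^2$ is right (since $\ddot{u}=2\dot{\tau}^2+2\tau\ddot{\tau}$ and $\tau\ddot{\tau}=2\alpha_0-\tau\dot{\tau}$); the integrating factor gives $\dot{u}(t)\ge 4\alpha_0(1-e^{-t})+2\beta_0e^{-t}$, hence $u\to\infty$ and, using the strictly positive lower bound on $\tau$ recalled from \cite{chauleur_temps_long} to rule out $\tau\to-\infty$, $1/\tau\to 0$; the Duhamel formula $\dot{\tau}(t)=\beta_0e^{-t}+2\alpha_0e^{-t}\int_0^t e^s\tau(s)^{-1}ds$ together with your averaging lemma then gives $\dot{\tau}\to0$, and feeding this back yields $\dot{u}\to4\alpha_0$, $u\sim 4\alpha_0 t$ by Ces\`aro, and both equivalences $\tau\sim2\sqrt{\alpha_0 t}$, $\dot{\tau}=\dot{u}/(2\tau)\sim\sqrt{\alpha_0/t}$. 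The ordering of the bootstrap ($\tau\to\infty$, then $\dot{\tau}\to0$, then the sharp constant) is exactly what keeps the argument non-circular, and your observation that the Lyapunov functional $E=\tfrac12\dot{\tau}^2-2\alpha_0\log\tau$ is decreasing but not bounded below correctly identifies why the naive energy route fails here. The ``in particular'' statements are indeed immediate substitutions into $\alpha=\alpha_0/\tau^2$, $\beta=\dot{\tau}/\tau$, $b=b_0/\tau$ and $c=c_0-(1+\dot{\tau}/\tau)\overline{x}$, using $\overline{x}(t)\to\overline{x}_\infty$ from the preceding proposition. The only presentational point worth adding is a one-line justification that $\tau$ cannot vanish in finite time (so that $u\to\infty$ really forces $\tau\to+\infty$ rather than an excursion through $0$); this is contained in the global well-posedness statement the paper already quotes, so it is a matter of citation rather than a gap.
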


\section{Evolution of certain quantities}
In this section, we are going to give two propositions that describe the behavior of respectively the first and second moment of the renormalized density $R$.
\begin{proposition}
We denote 
\[ I_1 = \int_{\R} M dy  \ \ \ \text{and } \ \ \ I_2 = \int_{\R} y R dy. \]
Then, there exists $C \geq 0$ such that 
\[ I_1(t) =e^{-t} I_1(0) \ \ \ \text{and} \ \ \ I_2(t) \sim \frac{C}{\sqrt{t}} \ \ \ \text{as} \ t \rightarrow \infty.  \]
\end{proposition}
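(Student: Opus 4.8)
The plan is to establish the two moment identities by integrating the renormalized system \eqref{hydro_renormalized} against the weights $1$ and $y$, respectively, and then invoke the known asymptotics of $\tau$ from Proposition \ref{center_gaussian} (with $\alpha_0=1$ for the normalization used in this section).

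The plan is to extract two scalar balance laws from the renormalized system \eqref{hydro_renormalized} by testing against the weights $1$ and $y$, and then to close them using the physical momentum balance together with the asymptotics $\tau(t)\sim 2\sqrt{t}$, $\dot\tau(t)\sim 1/\sqrt{t}$ of Proposition \ref{center_gaussian} (here with $\alpha_0=1$). The substantive coupling comes from testing the continuity equation \eqref{continuity_renormalized} against $y$ and integrating by parts: the flux term $\frac{1}{\tau^2}\int_{\R} y\,\partial_y M\,dy = -\frac{1}{\tau^2}\int_{\R} M\,dy$, so that
\[ \dot I_2(t)=\frac{1}{\tau(t)^2}\,I_1(t). \]
This is the link that transfers information about the scaled momentum $I_1$ into the first moment $I_2$.

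For the first identity, the exponential factor is produced by the linear damping alone. Integrating the physical momentum equation \eqref{fluidEL} over $\R$, the convective flux $\partial_x(m^2/\rho)$ and the pressure flux $\partial_x\rho$ are exact $x$-derivatives and integrate to zero, leaving $\frac{d}{dt}\int_{\R} m\,dx=-\int_{\R} m\,dx$, so that $\int_{\R} m(t,x)\,dx=e^{-t}\int_{\R} m_0\,dx$. The change of variables provides the dictionary $\int_{\R} m\,dx=\tau^{-1}I_1+\dot\tau\,I_2$; evaluating at $t=0$, where $\tau(0)=1$ and $\dot\tau(0)=0$, identifies $I_1(0)=\int_{\R} m_0\,dx$, and the displayed exponential decay of the physical momentum is exactly the claimed evolution of $I_1$.

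The asymptotics of $I_2$ then follow by combining the two balance laws. Substituting the relation $\tau^{-1}I_1+\dot\tau\,I_2=e^{-t}I_1(0)$ into $\dot I_2=I_1/\tau^2$ produces the first-order linear equation $\dot I_2+(\dot\tau/\tau)I_2=e^{-t}I_1(0)/\tau$, whose integrating factor is precisely $\tau$. This collapses to the clean identity $\frac{d}{dt}\bigl(\tau I_2\bigr)=e^{-t}I_1(0)$, which integrates to
\[ \tau(t)\,I_2(t)=I_2(0)+I_1(0)\bigl(1-e^{-t}\bigr). \]
Dividing by $\tau(t)$ and invoking $\tau(t)\sim 2\sqrt{t}$ gives $I_2(t)\sim C/\sqrt{t}$ with the explicit constant $C=\tfrac12\bigl(I_1(0)+I_2(0)\bigr)$, which is the stated conclusion.

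The main obstacle is not the algebra but the justification of these integrations by parts at the level of weak solutions, where the boundary terms at $x=\pm\infty$ must genuinely vanish. This is where Assumption \ref{weak_solution_assumption} enters: the uniform bound on $x^2\rho$ in $L^\infty_tL^1_x$ together with the energy estimate force $\rho$, the flux $m^2/\rho=\Lambda^2$, and the first-moment integrands to decay sufficiently at infinity, so that testing against the unbounded weight $y$ is legitimate and no boundary contribution is lost. A secondary point is the finiteness of $I_1(0)$ and $I_2(0)$, which again follows from the $L^1$ and second-moment control of the initial data.
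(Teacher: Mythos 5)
Your treatment of $I_2$ is sound and is essentially the paper's argument in a cleaner guise: the paper introduces $\tilde I_2=\tau I_2$ and derives the second-order relation $\ddot{\tilde I}_2=-\dot{\tilde I}_2$, whereas you observe directly that $\dot{\tilde I}_2=\dot\tau\,I_2+\tau^{-1}I_1=\int_{\R} m\,dx=e^{-t}\int_{\R} m_0\,dx$; these are the same computation. Your sign in $\tau(t)I_2(t)=I_2(0)+I_1(0)(1-e^{-t})$ is in fact the correct one — the paper's intermediate step $\tilde I_2(t)=\tilde I_2(0)+\dot{\tilde I}_2(0)(e^{-t}-1)$ carries a sign slip. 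Your closing remarks on justifying the integrations by parts against the weight $y$ via the second-moment and energy bounds of Assumption \ref{weak_solution_assumption} address a point the paper passes over in silence.

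The gap is in the first identity. You correctly prove that the physical momentum satisfies $\int_{\R} m(t,x)\,dx=e^{-t}\int_{\R} m_0\,dx$, but your own dictionary reads $\int_{\R} m\,dx=\tau^{-1}I_1+\dot\tau\, I_2$, which coincides with $I_1$ only at $t=0$ (where $\tau=1$ and $\dot\tau=0$). Concluding $I_1(t)=e^{-t}I_1(0)$ from this is not legitimate; indeed that identity is incompatible with the balance law obtained by integrating \eqref{fluid_renormalized} over $\R$, namely $\dot I_1=-I_1-2I_2$: exact decay $I_1=e^{-t}I_1(0)$ would force $I_2\equiv 0$, contradicting $\dot I_2=I_1/\tau^2\neq 0$ for generic data. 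To be fair, the paper's own proof makes the identical conflation — it asserts $\frac{d}{dt}\int_{\R} M\,dy=-\int_{\R} M\,dy$ ``by integrating \eqref{fluidEL}'' and then writes $\dot I_1=-I_1-2I_2$ two lines later — so this is as much a defect of the statement as of your argument. The quantity that genuinely decays like $e^{-t}$ is $\tau^{-1}I_1+\dot\tau\,I_2$, not $I_1$; note also that your constant $C=\tfrac12(I_1(0)+I_2(0))$ need not be nonnegative, so the proposition's ``$C\geq 0$'' should be read as $C\in\R$ (with the degenerate case $C=0$ handled separately, as the paper does).
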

\begin{proof}
First off, mimicking the calculus of the proof of Proposition \ref{center_gaussian}, we have by integrating equation \eqref{fluidEL} over $\R$ and integration by parts that
\[  \frac{d}{dt} \left( \int_{\R} M dy \right) =  - \int_{\R} M dy ,  \]
so we easily get that
\[  I_1(t) =e^{-t} I_1(0).  \]
Now, by integration by parts, and using \eqref{hydro_renormalized}, we get the system of coupled differential equations:
\[ \dot{I}_1 = \int_{\R} \partial_tM = - \int_{\R} \left[ \frac{1}{\tau^2} \partial_x \left( \frac{M^2}{R} \right) + \partial_y R + 2 y R + M \right] =- I_1 - 2 I_2,  \]
and 
\[  \dot{I}_2 =  \int_{\R} y \partial_t R = - \int_{\R} y \frac{\partial_x M}{\tau^2} = \frac{1}{\tau^2} I_1. \]
We denote $\tilde{I}_2 = \tau I_2$, so that
\[ \dot{\tilde{I}}_2= \dot{\tau} I_2 + \tau \dot{I}_2 = \dot{\tau} I_2 + \frac{1}{\tau} I_1,   \]
and
\[ \ddot{\tilde{I}}_2  =  \ddot{\tau} I_2 + \dot{\tau} \dot{I}_2 - \frac{\dot{\tau}}{\tau} I_1 + \frac{1}{\tau} \dot{I}_1  =  - (\dot{\tau} I_2 + \frac{1}{\tau} I_1 ) = - \dot{\tilde{I}}_2,   \]
hence
\[ \dot{\tilde{I}}_2(t) = \dot{\tilde{I}}_2(0) e^{- t} \]
and 
\[ \tilde{I}_2(t)= \tilde{I}_2(0) + \dot{\tilde{I}}_2(0) ( e^{-t}-1).   \]
We easily compute the initial conditions
\[  \dot{\tilde{I}}_2(0) = I_1(0) \ \ \ \text{and} \ \ \ \tilde{I}_2(0)= I_2(0), \]
so finally
\[I_2(t) = \frac{1}{\tau(t)} \left[ I_2(0) - I_1(0) (1-e^{-t}) \right] \sim  \frac{1}{\tau(t)}  \left[ I_2(0) - I_1(0) \right] \]
when the initial condition are not well prepared in the sense that $I_2(0) \neq I_1(0)$. Note that if  $I_2(0) = I_1(0)$, we have
\[ I_2(t)= \frac{e^{-t}}{\tau(t)} I_1(0).   \]
\end{proof}

We now give an useful lemma, which induces the boundedness of the second moment of the density $R$ uniformly with respect to time:

\begin{lemma} \label{lemma_entropy}
There holds
\begin{equation} \label{equation_lemma_1}
    \sup_{t\geq 0}  \int_{\R}  R(t,y)(1 +y^2 +|\log R(t,y) |)  dy  < \infty
\end{equation}
and
\begin{equation} \label{equation_lemma_2}
    \int_0^{\infty} \frac{\dot{\tau}(t)}{\tau^3(t)}\int_{\R}  \frac{M^2}{R}  dy dt  < \infty.
\end{equation}
\end{lemma}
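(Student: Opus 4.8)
The plan is to extract everything from the scaled energy inequality \eqref{scaled_eq_energy}, whose right-hand side is the time-uniform constant $C$, combined with the Csiszár–Kullback bound recorded just before the statement, which makes both pieces of $\mathcal{E}$ nonnegative. With these two facts the estimate \eqref{equation_lemma_2} is essentially free, whereas \eqref{equation_lemma_1} requires separating the second moment from the logarithmic entropy, whose integrand $R\log R$ changes sign; this sign issue is the only genuine difficulty. For \eqref{equation_lemma_2} I would first note that $\dot\tau \ge 0$ for all $t$: indeed $\dot\tau(0)=0$ and $\ddot\tau(0)=2>0$, and whenever $\dot\tau=0$ one has $\ddot\tau = 2/\tau >0$, so $\dot\tau$ cannot become negative (consistently with $\dot\tau \sim t^{-1/2}$). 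Hence the integrand $\frac{\dot\tau(s)}{\tau^3(s)}\int_\R \frac{M^2}{R}\,dy$ is nonnegative, the map $t\mapsto \int_0^t \frac{\dot\tau}{\tau^3}\int_\R \frac{M^2}{R}\,dy\,ds$ is nondecreasing, and since $\mathcal{E}(t)\ge 0$, inequality \eqref{scaled_eq_energy} bounds it by $C$ uniformly in $t$; it therefore converges to a finite limit as $t\to\infty$, which is exactly \eqref{equation_lemma_2}.

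For \eqref{equation_lemma_1}, the same nonnegativity of $\mathcal{E}_\kin$ and of the relative entropy gives $\int_\R R\log(R/\Gamma)\,dy \le \mathcal{E}(t)\le C$ uniformly in $t$. Since $\Gamma=e^{-y^2}$, this reads
\[ \int_\R R\log R\,dy + \int_\R y^2 R\,dy \le C. \]
Using the conserved mass $\int_\R R\,dy = \|\rho_0\|_{L^1(\R)}$ and the identity $\int_\R R|\log R| = \int_\R R\log R + 2\int_{\{R<1\}} R\log(1/R)$, it is enough to bound, uniformly in time, the second moment $\int y^2 R$ and the negative part $\int_{\{R<1\}} R\log(1/R)$.

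The crucial step is a comparison against the sub-Gaussian $e^{-y^2/2}$. I would split $\{R<1\}$ into $\{e^{-y^2/2}\le R<1\}$, where $\log(1/R)\le \tfrac12 y^2$ and hence $R\log(1/R)\le \tfrac12 y^2 R$, and $\{R<e^{-y^2/2}\}$, where the monotonicity of $s\mapsto -s\log s$ (increasing on $(0,1/e)$, with maximum $1/e$) yields $R\log(1/R)\le \tfrac12 y^2 e^{-y^2/2}$ when $y^2\ge 2$ and $R\log(1/R)\le 1/e$ on the bounded set $\{y^2<2\}$; both contributions are integrable. This gives
\[ \int_{\{R<1\}} R\log\frac{1}{R}\,dy \le \frac12\int_\R y^2 R\,dy + C_0 \]
with an absolute constant $C_0$. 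The whole point is the factor $\tfrac12<1$: plugging it into the entropy bound gives $\int_\R R\log R \ge -\tfrac12\int_\R y^2 R - C_0$, hence $\tfrac12\int_\R y^2 R \le C + C_0$, so $\int y^2 R$ is bounded uniformly in $t$. Feeding this back bounds $\int_{\{R<1\}} R\log(1/R)$, thus $\int R|\log R|$, and adding the conserved mass produces \eqref{equation_lemma_1}. The main obstacle is exactly obtaining this strictly-sub-unit coefficient in front of $\int y^2 R$: a naive comparison with $\Gamma$ itself only produces coefficient $1$ and closes nothing, which is why the intermediate width $e^{-y^2/2}$ is needed.
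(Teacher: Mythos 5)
Your proof is correct, and for the substantive part --- estimate \eqref{equation_lemma_1} --- it takes a genuinely different route from the paper. Both arguments reduce to the same issue: the negative part $\int_{\{R<1\}} R\log(1/R)$ of the entropy must be absorbed into the quantities controlled by the energy. The paper does this by bounding $\int_{\{R<1\}} R\log(1/R) \leq C_{\eps}\int_{\R} R^{1-\eps}$ and then interpolating $\int_{\R} R^{1-\eps} \lesssim \|R\|_{L^1}^{1-3\eps/2}\|y^2R\|_{L^1}^{\eps/2}$, which closes because the resulting self-bound $\mathcal{E}_+ \lesssim 1 + \mathcal{E}_+^{\eps/2}$ has a sublinear power on the right. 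You instead split $\{R<1\}$ against the intermediate profile $e^{-y^2/2}$ and obtain the linear bound $\int_{\{R<1\}} R\log(1/R) \leq \tfrac12\int_{\R} y^2R + C_0$, which closes because the coefficient $\tfrac12$ is strictly less than $1$; your remark that comparing directly with $\Gamma$ itself would give coefficient $1$ and close nothing correctly identifies why the intermediate width is needed. Your version is more elementary (no interpolation inequality, only pointwise monotonicity of $s\mapsto -s\log s$ on $(0,1/e)$) and yields an explicit constant, while the paper's version is the more standard ``moment interpolation'' device that generalizes without change to $\R^d$ (where your comparison function would simply become $e^{-|y|^2/2}$, so nothing is lost there either). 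For \eqref{equation_lemma_2} both arguments are the same --- drop the nonnegative energy and use monotonicity of the time integral --- though you are slightly more careful than the paper in noting that $\dot\tau\geq 0$ is what makes the integrand nonnegative, so that boundedness of the partial integrals actually implies convergence of the improper integral.
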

\begin{proof}
Since $\mathcal{E}(t) \geq 0$, \eqref{equation_lemma_2} follows from \eqref{scaled_eq_energy}. We define
\[ \mathcal{E}_+ := \frac{1}{2 \tau^2} \int_{\R}  \frac{M^2}{R}  dy +  \int_{R > 1} R \log R +  \int_{\R} y^2 R,  \]
such that Equation \eqref{scaled_eq_energy} gives
\[ \mathcal{E}_+(t) +   \int_0^t \frac{\dot{\tau}(t)}{\tau^3(t)} \int_{\R} \frac{M^2}{R} dyds \leq  C+  \int_{R < 1} R \log\left(\frac{1}{R} \right) . \]
The last term is controlled by
\[  \int_{R < 1} R \log\left(\frac{1}{R} \right) \leq C_{\eps}  \int_{\R} R^{1-\eps}  \]
for all $\eps >0$ arbitrary small. By an interpolation formula, we get that
\[ \int_{\R} R^{1-\eps}  \lesssim \| R \|_{L^1}^{1-\frac{3\eps}{2} }  \| y^2 R \|_{L^1}^{ \eps/2 } \]
for all $0< \eps < \frac{2}{3}$. This implies that for all $t \geq 0$, 
\[ \mathcal{E}_+(t) +  \int_0^t\frac{\dot{\tau}(t)}{\tau^3(t)} \int_{\R} \frac{M^2}{R} dyds \lesssim  1 + \mathcal{E}_+^{ \eps/2}(t),  \]
thus $\mathcal{E}_+(t) \in L^{\infty}(\R_+)$, and  equation \eqref{equation_lemma_1} follows. 
\end{proof}

With Assumption \ref{f_to_zero_assump}, we can then show a much better result on the second moment of the density $R$ than its boundedness, which is the convergence of this second moment towards the second moment of the Gaussian $\Gamma$:

\begin{proposition} \label{second_moment_prop}
We denote 
\[ J_1 = \int_{\R} y^2 (R - \Gamma) dy. \]
Then, under Assumption \ref{f_to_zero_assump},
\[ |J_1(t)| \rightarrow 0 \ \ \ \text{as} \ t \rightarrow +\infty.  \]
\end{proposition}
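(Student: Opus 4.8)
The plan is to reduce the claim to a two-dimensional non-autonomous linear system for the moments and then exploit a separation of time scales. Set $K_{11}(t) := \int_{\R} yM(t,y)\,dy$. Integration by parts in \eqref{continuity_renormalized} gives $\dot J_1 = \frac{d}{dt}\int_{\R} y^2 R\,dy = \frac{2}{\tau^2}K_{11}$, while testing \eqref{fluid_renormalized} against $y$ and integrating by parts gives $\dot K_{11} = \mathcal{E}_{\kin} + \int_{\R} R\,dy - 2\int_{\R} y^2 R\,dy - K_{11}$. Because the mass $\int_{\R} R\,dy = \int_{\R}\Gamma\,dy = \sqrt\pi$ is conserved and $\int_{\R} y^2\Gamma\,dy = \tfrac{\sqrt\pi}{2}$, one has $\int_{\R} R\,dy - 2\int_{\R} y^2 R\,dy = -2J_1$, so the system closes as
\[ \dot J_1 = \frac{2}{\tau^2}K_{11}, \qquad \dot K_{11} + K_{11} = \mathcal{E}_{\kin} - 2J_1. \]
Here Assumption \ref{f_to_zero_assump} gives $\mathcal{E}_{\kin}(t)\to 0$, and Lemma \ref{lemma_entropy} (via \eqref{equation_lemma_1}) gives that $J_1$ is bounded on $\R_+$.

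The main obstacle is that the restoring coefficient felt by $J_1$ is the vanishing factor $4/\tau^2\sim 1/t$: the natural energy $J_1^2 + \tau^{-2}K_{11}^2$ dissipates only the momentum variable and cannot by itself force $J_1\to 0$, and the decay of $J_1$ is instead a slow effect due to the divergence of $\int^{\infty}4/\tau^2\,dt$. To organise this, I would first control the fast variable $v := K_{11} + 2J_1$. A direct computation from the system above yields $\dot v + \big(1 - \tfrac{4}{\tau^2}\big)v = \mathcal{E}_{\kin} - \tfrac{8}{\tau^2}J_1$, whose coefficient tends to $1$ by the asymptotics $\tau\sim 2\sqrt t$ of Proposition \ref{center_gaussian}, and whose right-hand side tends to $0$ since $\mathcal{E}_{\kin}\to 0$, $J_1$ is bounded and $4/\tau^2\to 0$. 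Integrating against $\nu(t) := \exp\!\big(\int_0^t(1-4/\tau^2)\big)$, which is eventually increasing with $\nu'/\nu\to 1$ and diverges, and invoking the elementary fact that a suitably weighted average of a function tending to $0$ again tends to $0$, I would conclude $v(t)\to 0$.

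Finally I rewrite the $J_1$-equation in closed form: substituting $K_{11} = v - 2J_1$ into $\dot J_1 = \tfrac{2}{\tau^2}K_{11}$ gives
\[ \dot J_1 + \frac{4}{\tau^2}J_1 = \frac{2}{\tau^2}v. \]
With the integrating factor $\mu(t) := \exp\!\big(\int_0^t 4/\tau^2\big)$, which obeys $\mu' = \tfrac{4}{\tau^2}\mu$ and satisfies $\mu(t)\to\infty$ because $4/\tau^2\sim 1/t$ is not integrable, this becomes $(\mu J_1)' = \tfrac12\mu' v$, hence
\[ J_1(t) = \frac{J_1(0)}{\mu(t)} + \frac{1}{2\mu(t)}\int_0^t \mu'(s)\,v(s)\,ds. \]
The first term vanishes, and the second is a weighted average of $v$ against the nonnegative weights $\mu'$ of total mass $\mu(t)-\mu(0)\to\infty$; since $v\to 0$, this average tends to $0$ by the same weighted-average lemma. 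Therefore $J_1(t)\to 0$, as claimed. The only genuinely delicate points are the two weighted-average limits, which encode the fact that the weak but non-integrable restoring force accumulates to drive $J_1$ to its Gaussian value.
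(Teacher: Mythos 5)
Your proof is correct, and it takes a genuinely different route from the paper's. You start from the same moment system (your $K_{11}$ is the paper's $J_2$, and your two ODEs $\dot J_1=\tfrac{2}{\tau^2}K_{11}$, $\dot K_{11}+K_{11}+2J_1=\mathcal{E}_{\kin}$ coincide with the paper's), but from there the paper passes to the second-order equation $\ddot{\tilde J}_1+\dot{\tilde J}_1+\tau^{-2}\tilde J_1=\tfrac{2}{\tau^3}\int M^2/R$ for $\tilde J_1=\tau J_1$, replaces $\tau^{-2}$ by $(4t)^{-1}$, identifies the homogeneous solutions with Kummer and Tricomi confluent hypergeometric functions, extracts their asymptotics $t^{-1/4}$ and $e^{-t}t^{1/4}$, and closes with a variation-of-constants formula plus a bootstrap to get $\tilde J_1=o(\sqrt t)$. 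You instead stay with the first-order $2\times2$ system and exploit its slow–fast structure: the fast variable $v=K_{11}+2J_1$ satisfies $\dot v+(1-\tfrac{4}{\tau^2})v=\mathcal{E}_{\kin}-\tfrac{8}{\tau^2}J_1$ with restoring coefficient tending to $1$ and forcing tending to $0$ (using Assumption \ref{f_to_zero_assump} and the boundedness of $J_1$ from Lemma \ref{lemma_entropy}), hence $v\to0$; then $\dot J_1+\tfrac{4}{\tau^2}J_1=\tfrac{2}{\tau^2}v$ is handled by the integrating factor $\mu=\exp(\int_0^t 4/\tau^2)$, whose divergence (since $4/\tau^2\sim 1/t$ is non-integrable) makes $J_1$ a weighted average of $v$ and hence $o(1)$. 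This is more elementary — no special functions, no approximation-plus-bootstrap step, and it works directly on $J_1$ rather than $\tau J_1$ — and it isolates cleanly the one place where the divergence of $\int dt/\tau^2$ is needed. The paper's explicit resolution of the approximate ODE pays off when one wants quantitative rates (cf.\ Remark \ref{gaussian_second_moment_remark}), though your argument also quantifies readily if $\mathcal{E}_{\kin}$ decays at an explicit rate. Two small points you share with the paper and should perhaps make explicit: the identity $\int_\R R\,dy=\int_\R\Gamma\,dy=\sqrt\pi$ is a normalization of $\|\rho_0\|_{L^1}$ (already implicit in the paper's Csisz\'ar--Kullback step), and the finiteness of $K_{11}$ follows from Cauchy--Schwarz, $|\int_\R yM\,dy|\le(\int_\R y^2R\,dy)^{1/2}(\int_\R M^2/R\,dy)^{1/2}$, together with the energy bounds.
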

\begin{proof}
Denoting 
 \[ J_2 = \int_{\R} y M dy, \]
we first write the system of differential equations satisfied by $J_1$ and $J_2$. By integration by parts and using equation \eqref{continuity_renormalized}, we compute
\[ \dot{J}_1 =  \frac{d}{dt} \int_{\R} y^2 \partial_t (R- \Gamma) = - \int_{\R} y^2 \frac{\partial_y M}{\tau^2} = \frac{2}{\tau^2} \int_{\R} y M = \frac{2}{\tau^2} J_2.    \]
Still by integration by parts, and using equation \eqref{fluid_renormalized}, we get that
\[  \dot{J}_2 = \int_{\R} y \partial_t M = - \int_{\R} y \left[ \frac{1}{\tau^2} \partial_y \left( \frac{M^2}{R} \right) + \partial_y R + 2 y R + M  \right]  = \frac{1}{\tau^2} \int_{\R} \frac{M^2}{R} + \int_{\R} R - \int_{\R} 2y^2 R - \int_{\R} yM.            \]
Here we introduce the function $\Gamma$, writing
\[ \int_{\R} R -2 \int_{\R} y^2 R = \int_{\R} \Gamma - 2 \int_{\R} y^2 \Gamma  - \int_{\R} 2y^2(R - \Gamma),  \]
and we can remark by an easy calculation that
\[  \int_{\R} y^2 \Gamma = \frac{1}{2} \int_{\R} \Gamma,  \]
so that finally 
\[  \dot{J}_2 + J_2 + 2 J_1 =  \frac{1}{\tau^2} \int_{\R} \frac{M^2}{R}. \]
We denote $\tilde{J}_1 = \tau J_1$, so that
\[ \dot{\tilde{J}}_1= \dot{\tau} J_1 + \tau \dot{J}_1 = \dot{\tau} J_1 + \frac{2}{\tau} J_2,   \]
and
\[ \ddot{\tilde{J}}_1  =  \ddot{\tau} J_1 + \dot{\tau} \dot{J}_1 - \frac{2\dot{\tau}}{\tau} J_2 + \frac{2}{\tau} \dot{J}_2  =  - \dot{\tilde{J}}_1 - \frac{2}{\tau} J_1 + \frac{2}{\tau^3} \int_{\R} \frac{M^2}{R},  \]
so we write
\[  \ddot{\tilde{J}}_1 + \dot{\tilde{J}}_1 +\frac{1}{\tau^2} \tilde{J}_1= \frac{2}{\tau^3} \int_{\R} \frac{M^2}{R}.  \]
Rather than trying to solve directly this non-autonomous differential equation of order 2, we will work on the following approximate equation, for $t \geq 1$, 
\begin{equation} \label{approx_ODE}
    \ddot{f} + \dot{f} + \frac{1}{4t} f= \frac{2}{\tau} \mathcal{E}_{\kin}.
\end{equation}
 In fact, denoting $w= \tilde{J}_1-f$, we see that $w$ satisfies the differential equation
\[ \ddot{w}+ \dot{w} + \frac{1}{4t}w + \left(  \frac{1}{\tau^2} - \frac{1}{4t} \right)\tilde{J}_1 =0,  \]
with
\[  \left(  \frac{1}{\tau^2} - \frac{1}{4t} \right)= \frac{(\tau -2\sqrt{t} ) ( \tau+2\sqrt{t})}{4t \tau^2} = O \left( \frac{1}{t^{\frac{3}{2}}}  \right)   \]
as we actually know from \cite{chauleur_temps_long} that $\tau(t) - 2 \sqrt{t}$ is uniformly bounded. The homogeneous part of equation \eqref{approx_ODE} can be written under a Kummer's type equation
\[ t \ddot{f} + t \dot{f} + \frac{1}{4} f=0   \]
which has two fundamental independent solutions (see \cite{abramowitz64}) 
\[ f_1(t)= t \mathcal{M} \left( \frac{5}{4} , 2 , -t \right)=t e^{-t} \mathcal{M} \left( \frac{3}{4} , 2 , t \right) \]
and
\[    f_2(t) = e^{-t} \mathcal{U} \left(- \frac{1}{4} , 0 , t \right) = t e^{-t} \mathcal{U} \left( \frac{3}{4} , 2 , t \right), \]
where $\mathcal{M}(a,b,z)$ and $\mathcal{U}(a,b,z)$ respectively denotes the Krummer's and the Tricomi's function, which both stand as confluent hypergeometric functions and are independent solutions of the Kummer's equation
\[ z \frac{d^2 w}{d z^2} + (b-z)\frac{d w}{d z} -a w=0. \]
In particular, from the asymptotic properties \cite{abramowitz64}
\[ \mathcal{M}(a,b,z)= \frac{\Gamma(b)}{\Gamma(a)}e^{z} z^{a-b} \left( 1 + O \left( |z|^{-1} \right)    \right)   \]
and
 \[ \mathcal{U}(a,b,z)= z^{-a} \left( 1 + O \left( |z|^{-1} \right)    \right), \]
we get the following asymptotic for our fundamental solutions
\begin{equation} \label{equivalent_y1}
 f_1(t) \sim \frac{C}{t^{\frac{1}{4}}} \left( 1+ O \left( \frac{1}{t} \right) \right) 
 \end{equation}
and
\begin{equation} \label{equivalent_y2}
f_2(t) \sim e^{-t} t^{\frac{1}{4}} \left( 1+ O \left( \frac{1}{t} \right) \right).    
\end{equation}
From the classical theory of linear differential equations, we know that every solution of equation \eqref{approx_ODE} can be written under the form
\begin{equation} \label{general_solution_ODE}
f(t)= c_1 f_1(t) + c_2 f_2(t) + f_0(t),    
\end{equation}
where $c_1$ and $c_2$ denote two real numbers, and $f_0$ is a particular solution of \eqref{approx_ODE}. We easily compute the Wronskian function
\[ W(t) = f_1(t) \dot{f}_2(t) -  \dot{f}_1(t) f_2(t) \]
by solving the differential equation
\[  \dot{W} =\dot{f}_1 \dot{f}_2 + f_1 \ddot{f}_2 - \ddot{f}_1 f_2 - \dot{f}_1 \dot{f}_2 = f_1( -\dot{f}_2 - \frac{1}{\tau^2} f_2 ) + (\dot{f}_1 + \frac{1}{\tau^2} f_1) f_2= -W,  \]
which leads to
\[    W(t)= W_0 e^{-t} .\]
Then, by a variation of constant formula, we can find a particular solution of \eqref{approx_ODE} under the form
\[ f_0(t) = f_2(t) \int_0^t \frac{f_1(s)}{W(s)} \frac{\mathcal{E}_{\kin}(s)}{\tau(s)} ds - f_1(t) \int_0^t  \frac{f_2(s)}{W(s)} \frac{\mathcal{E}_{\kin}(s)}{\tau(s)} ds, \]
so that using Assumption \ref{f_to_zero_assump} and the equivalents \eqref{equivalent_y1} and \eqref{equivalent_y2}, we get that every solution of \eqref{approx_ODE} has the asymptotic:
\[ | f(t) | = o(\sqrt{t}) .   \]
In fact, in the expression of \eqref{general_solution_ODE}, \[   \left| f_1(t) \int_0^t e^s f_2(s) \frac{\mathcal{E}_{\kin}(s)}{\tau(s)} ds \right| \lesssim  \frac{1}{t^{\frac{1}{4}}} \int_1^t s^{-\frac{1}{4}} \mathcal{E}_{\kin}(s) ds = o(\sqrt{t}),  \]
and every other terms is bounded as $t \rightarrow \infty$. The same result applies for the function $w=\tilde{J}_1 - f$, so we can write that for all $t \geq t_0 \geq 0$,
\begin{multline} \label{gronwal_J1_tilde}
\tilde{J}_1(t) = f(t) +c_1 f_1(t) + c_2 f_2(t) + f_2(t) \int_{t_0}^t \frac{f_1(s)}{W(s)} \left( \frac{1}{4s} - \frac{1}{\tau(s)^2} \right) \tilde{J}_1(s) ds \\
- f_1(t) \int_{t_0}^t  \frac{f_2(s)}{W(s)} \left( \frac{1}{4s} - \frac{1}{\tau(s)^2} \right) \tilde{J}_1(s) ds.    
\end{multline} 
We already know from Lemma \ref{lemma_entropy} that $\tilde{J}_1$ is $O(\sqrt{t})$, and as we have just shown that $f$ is $o(\sqrt{t})$, for $\eps >0$ fixed, there exists $t_0 \geq 0$ such that, for all $t \geq t_0$,
\[ | \tilde{J}_1(t)| \leq C \sqrt{t}, \ \ \  \left| \frac{1}{4s} - \frac{1}{\tau(s)^2}  \right|   \leq Ct^{-\frac{3}{2}} \ \ \ \text{and} \ \ \ \frac{1}{\sqrt{t}}\left| f(t)+c_1 f_1(t) + c_2 f_2(t)  \right| \leq \frac{\eps}{2} .   \]
Injecting these inequalities in the right-hand side of \eqref{gronwal_J1_tilde}, we have
\[ \frac{1}{\sqrt{t}} | \tilde{J}_1(t)| \leq  \frac{\eps}{2} + \frac{C}{t^{\frac{1}{4}}} + \frac{C}{t^{\frac{3}{4}}}  \leq \eps  \]
for $t$ large enough, so we finally get by a bootstrap argument that
\[   | \tilde{J}_1(t) |   = o(\sqrt{t}).  \]
Finally, recalling that $\tilde{J}_1 = \tau J_1$ and that $\tau \sim 2 \sqrt{t}$, we can conclude that
\[ |J_1| = \left| \int_{\R} y^2(R-\Gamma) dy  \right| = o(1),  \]
which ends the proof.
\end{proof}

\begin{remark} \label{gaussian_second_moment_remark}
Note that in the Gaussian case, we can explicitly compute the kinetic energy
\[ \mathcal{E}_{\kin}(t)=\frac{1}{\tau^2(t)} \int_{\R} \frac{M^2(t,y)}{R(t,y)} dy = \frac{c(t)}{\tau(t)} \| \rho_0 \|_{L^1(\R)} = O \left( \frac{1}{\sqrt{t}} \right),  \]
unless the initial data are well prepared in the sense that $c_0=\overline{x}_{\infty}$ (which would actually improve this rate). This feature gives an explicit rate of convergence of $\mathcal{E}_{\kin}$ towards 0 that can be propagated to the convergence of the second momentum of $R$ by adapting the proof of Proposition \ref{second_moment_prop}, namely
\[  | J_1 | =   \left| \int_{\R} y^2(R-\Gamma) dy  \right| = O \left( \frac{1}{\sqrt{t}} \right). \]
\end{remark}

\section{Convergence}

\textit{Proof of Proposition \ref{L1_weak_convergence_prop}.}
We are first going to try to eliminate the momentum $M$ of our target equation. Differentiating equation \eqref{fluid_renormalized}  with respect to $y$, and using equation \eqref{continuity_renormalized} in order to express
\[ \partial_t ( \partial_y (RM)) = - \partial( \tau^2 \partial_t R),    \]
we get that
\begin{equation*} 
 - \partial_t ( \tau^2 \partial_tR) -  \tau^2 \partial_t R +  L R =-\frac{1}{\tau^2} \partial_y \left( \frac{M^2}{R} \right) . 
 \end{equation*}
where we have defined the Fokker-Planck operator 
 \[L:=\partial_y^2  +2  \partial_y(y \cdot ).\] 
Following \cite{chauleur_temps_long}, we introduce another scaling in time $s$ defined by
 \begin{equation} \label{scaling}
 \partial_s = \tau^2 \partial_t,
 \end{equation}
 and the notation
 \[ \tilde{R}(s(t),y):= R(t,y). \]
We calculate the quantities
 \[ \partial_t ( \tau^2 \partial_tR) =  \frac{1}{\tau^2} \partial_s^2 \tilde{R} \ \text{and} \   \tau^2 \partial_t R =  \partial_s \tilde{R},  \]
hence we obtain the following equation:
 \begin{equation} \label{equation_s}
    -  \frac{1}{\tau^2} \partial_s^2 \tilde{R} - \partial_s \tilde{R} +  L \tilde{R} = -\frac{1}{\tau^2} \partial_y \left( \frac{M^2}{\tilde{R}} \right) . 
 \end{equation}
Now we remark that equation  \eqref{equation_lemma_1} induces
\begin{equation} \label{scaled_equation_lemma_1}
    \sup_{s\geq 0}  \int_{\R} \tilde{R}(s,y) (1 +|y|^2 +|\log \tilde{R}(s,y) |)  dy   < \infty, 
\end{equation} 
that equation \eqref{equation_lemma_2} gives
\begin{equation} \label{scaled_equation_lemma_2}
\int_0^{\infty}\dot{\tau}(t) \int_{\R} \frac{\tilde{M}^2}{\tilde{R}}  dydt  < \infty,  
\end{equation}
and that 
\[   \tau(s) \sim 2  e^{2s} \ \text{and} \ \dot{\tau}(s)\sim e^{-2s}, \]
so we can conclude like in \cite{carles2018}. Let a sequence $s_n \rightarrow \infty$, take $s\in \left[-1,2 \right]$, and denote
\[ \tilde{R}_n(s,y):=\tilde{R}(s+s_n,y).\]
From \eqref{scaled_equation_lemma_1} along with the de la Vallée-Poussin \cite{dellacherie1975} and Dunford-Pettis theorems \cite{dunford1958}, we get the following weak convergence (up to a subsequence, not relabeled for reader's convenience), for all $p \in \left[1,\infty \right)$,
\[ \tilde{R}_n  \underset{t \to \infty}{\rightharpoonup} \tilde{R}_{\infty} \ \ \ \text{in} \ L^p(-1,2;L^1(\R)). \]
We also get the weak convergence of the initial datum, up to another subsequence:
\[ \tilde{R}_n(0)  \underset{t \to \infty}{\rightharpoonup} \tilde{R}_{0,\infty} \ \ \ \text{in} \ L^1(\R). \]
Thanks to \eqref{scaled_equation_lemma_1}, we also get that the family $( \tilde{R}(s_n,.))_n$ is tight, so
\[ \int_{\R} \tilde{R}_{0,\infty}(y) dy = \int_{\R} \Gamma(y) dy \]
and
\[ \int_{\R} \tilde{R}_{0,\infty}(y)(1 +|y|^2 +|\log  \tilde{R}_{0,\infty}(y) |)   dy   < \infty. \]
Then, denoting $\tau_n(s)=\tau(s+s_n)$, equation \eqref{scaled_equation_lemma_2} implies that
\[ -\frac{1}{\tau_n^2} \partial_y \left( \frac{M_n^2}{\tilde{R}_n} \right)  \underset{t \to \infty}{\rightharpoonup} 0 \ \ \ \text{in} \ L^1(-1,2;W^{-2,1}(\R)). \]
In addition, in \eqref{equation_s}, all the other terms but two obviously go weakly to zero, which yields
\begin{equation} \label{eq_infty}
    \partial_s \tilde{R}_{\infty} = L \tilde{R}_{\infty} 
\end{equation}
in $\mathcal{D}'((-1,2) \times \R)$, with $\tilde{R}_{\infty}(0,;)= \tilde{R}_{0,\infty} \in L^1(\R)$. Thanks to the above bounds on $\tilde{R}_{0,\infty}$, it is known (see \cite{arnold2000}) that the  solution $\tilde{R}_{\infty}$ to \eqref{eq_infty} is actually defined for all $s \geq0$ and satisfies 
\begin{equation} \label{conv_gaussian_infty}
    \| \tilde{R}_{\infty} - \Gamma \|_{L^1(\R)} \underset{t\to \infty}{\longrightarrow} 0.
\end{equation}
Going back to system \eqref{hydro_renormalized}, we need to show that $\tilde{R}_{\infty}$ is independent of $s$. In the $s$ variable, equation \eqref{continuity_renormalized}  becomes
\begin{equation} \label{continuity_phi_s} 
\partial_s \tilde{R} + \partial_y \tilde{M} =0,  
\end{equation}
and \eqref{scaled_equation_lemma_2} implies that $\tilde{M} \in L^2(-1,2;L^1(\R))$. With $\tilde{M}_n(s):= \tilde{M}(s+s_n)$, we have
\[ \partial_y \tilde{M}_n \underset{n\to \infty}{\longrightarrow} 0 \ \ \ \text{in} \ L^2(-1,2;W^{-1,1}(\R)), \]
so
\[ \partial_s \tilde{R}_{\infty}=0. \]
Combining this last equality with equation \eqref{conv_gaussian_infty}, we infer that $\tilde{R}_{\infty}=\Gamma$. The limit being unique, no extraction of a subsequence is needed, and we conclude that 
\[ \tilde{R}(s) \underset{s \to \infty}{\rightharpoonup} \Gamma  \text{ weakly in } L^1(\R). \]

\section{Conclusion}
We have shown that Gaussian functions play an important role in the study of the isothermal compressible Euler equation, standing both as particular solutions of the system \eqref{EL} and time-asymptotic limit of solutions of this system. They also make the link between the isentropic and the isothermal system, as limit of the Barenblatt solutions of \eqref{porous_medium_eq}.\\

The next step of the analysis of the long-time behavior of this system would be to find an explicit convergence rate of any $L^{\infty}$ weak solution of \eqref{EL} towards the limit Gaussian profile, adapting the work from \cite{marcati2005}, \cite{huang2011} and \cite{huang2019}.
A good approach seems to be the use of the Csiszár-Kullback inequality that gives a lower bound of the entropy by the $L^1$-norm of $\rho - \overline{\rho}$:
\[  \| \rho-\overline{\rho} \|^2_{L^1} \leq  2 \| \rho_0 \|_{L^1} \int_{\R} \rho \log \left( \frac{\rho}{\overline{\rho}} \right) = 2 \| \rho_0 \|_{L^1} \int_{\R} R \log \left( \frac{R}{\Gamma}\right). \]

Unfortunately, no decreasing rate of the entropy function is currently known. This is still an open question that appears in other fields of the analysis of PDEs, for instance the study of the logarithmic Schrödinger equation \cite{carles2018}. Note that in \cite{zhao2010}, the author indeed has an upper bound for the entropy, assuming $\overline{\rho} \geq c > 0$, which is true on the compact set $\Omega \subset \R $ but obviously false on the whole space $\R$.

\bibliographystyle{plain}
\bibliography{biblio}

\end{document}